%% file: CP2026III.tex
\documentclass{amsart}

\usepackage[utf8]{inputenc}%(only for the pdftex engine)
\usepackage{microtype}

\usepackage[backref,bookmarks=true,colorlinks=true,pdfstartview=FitV,linkcolor=blue, citecolor=red,urlcolor=green]{hyperref}  % hyperref package

\usepackage{mathrsfs}

\usepackage{amssymb,amsmath,amscd}
\usepackage{framed}

 \usepackage{stmaryrd}
\usepackage{url}

\usepackage[numbers]{natbib} 

\usepackage{subfig}
\usepackage{graphicx}
\usepackage{lpic}
\usepackage{etoolbox}
\usepackage{bbm} 

\makeatletter
\patchcmd{\lp@dimens}
{\epsfig{figure=\lp@epsfile}}
{\includegraphics{\lp@epsfile}}
{}{}
\patchcmd{\lp@dimens}
{\epsfig{figure=\lp@epsfile, width=\lp@tmpx, height=\lp@tmpy}}
{\includegraphics[width=\lp@tmpx,height=\lp@tmpy]{\lp@epsfile}}
{}{}
\makeatother

\usepackage[export]{adjustbox}

\usepackage{xcolor}

\numberwithin{equation}{section}

\input{lstylesCDG}

	\newtheorem{theorem}{Theorem}[section]

\newtheorem{lemma}[theorem]{Lemma}
\newtheorem{corollary}[theorem]{Corollary}

\theoremstyle{definition}
\newtheorem{definition}{Definition}[section]
\newtheorem{hypothesis}{Hypothesis}[section]

\theoremstyle{remark}

  \newcommand{\rome}{\mathrm e}
   \newcommand{\romel}{{\mathrm{e}, \mathrm{l}}}
    \newcommand{\romec}{\mathrm{c}}
     \newcommand{\romecr}{{\mathrm{c}, \mathrm{r}}}
      \newcommand{\romecrf}{{\mathrm{c}, \mathrm{r}, \mathrm{f}}}

\begin{document}

%	\theoremstyle{plain}
	%\newtheorem{theorem}{Theorem}[section]

%\begin{frontmatter} 

\title[The stability of Margulis space-times]{The stability of Margulis space-times with parabolic holonomy elements}

%\author[1]{Suhyoung}{Choi}{S.~Choi}{Daejeon}
%\author{Todd}{Drumm}{T.~Drumm}{Washington DC.}
%\author{William}{Goldman}{W.~Goldman}{College Park, Maryland}

\author{Suhyoung Choi} 
%    Address of record for the research reported here
\address{Department of Mathematical Sciences, KAIST,
		305-701, Daejeon, Republic of Korea}
\email{schoi@math.kaist.ac.kr}
%    Current address
%    \thanks will become a 1st page footnote.
%\thanks{Choi was supported by the Mid-career Researcher Program through 
%the NRF grant NRF-2013R1A1A2056698 funded by the MEST.} %(No. 2009-0057445).} 

\thanks{This work was supported by 
the National Research Foundation of Korea(NRF) grant funded by the Korea government 
(MEST) (No. NRF-2022R1A2C3003162).}

\begin{abstract}
	Let $\Lspace$ be a flat Lorentzian space of signature $(2,1)$. A Margulis space-time is a noncompact complete flat Lorentzian $3$-manifold $\Lspace/\Gamma$, where the holonomy group $\Gamma$ is a free group of rank $\bg\ge 2$ acting freely and properly discontinuously by isometries.
	We consider the case where $\Gamma$ contains a parabolic element. We show that sufficiently small deformations of $\Gamma$ still act properly discontinuously on $\Lspace$ provided their linear parts are Fuchsian; moreover, the number of conjugacy classes 
of parabolic elements may increase or decrease under deformation.
	Our proof combines our previous compactification of $\Lspace/\Gamma$ relative to 
parabolic holonomy elements with a partial generalization of the work of Carri\`ere. However, this result depends only on the parts on parabolic actions of our earlier work. We believe that the shortness of the proof of this openness result is of independent interest. 
\end{abstract}
	
%	Mathematics Subject Classification 2010: 57M50 (primary), 83A99 (secondary); 
	
%}

%\.dedicatory{Dedicated to the memory of Todd Drumm.}

%\subjclass[2010]{Primary }

%\keywords{}

\keywords{geometric structures, flat Lorentz space-time, Margulis space-time, $3$-manifolds}

\subjclass[MSC 2020]{57M50,  83A99}
%  \journalname{Forum Mathematicum}

% \communicated{}
%\dedication{}
%\received{}
%\accepted{}
%\journalyear{2019}
%\journalvolume{}
%\journalissue{}
%\startpage{1}
%\aop
%\DOI{}
%\tableofcontents

\date{\today}

\maketitle 

%\begin{document}

\section{Introduction} 
Let $\Lspace$ denote $\bR^{2, 1}$ viewed as an affine space 
with the standard coordinate system, 
Lorentzian inner product, orientation, and time-orientation. 
Let $\Gamma$ be a discrete free subgroup of $\Isom(\Lspace)$ of rank $n\geq 2$ acting properly discontinuously and freely on $\Lspace$. 
Then $\Lspace/\Gamma$ is said to be a {\em Margulis space-time}, and 
$\Gamma$ is the holonomy group. (See Section \ref{sub:geostr}.)

Every element of $\Isom(\Lspace)$ is of the form $x \mapsto Ax + b$ for $A \in \SO(2, 1)$ and $b\in \bR^{2, 1}$. 
There is a homomorphism $\mathcal{L}:\Isom(\Lspace) \ra \SO(2, 1)$ taking the linear part of the affine transformations. 
Denote by $\SO(2, 1)_+$ the subgroup of elements preserving the future cone. 
We denote by $\Isom_+(\Lspace)$ the inverse image of $\SO(2, 1)_+$ in $\Isom(\Lspace)$.

By Section 2.12 of \cite{FG83}, $\mathcal{L}|\Gamma$ is an injective map
to a discrete group since a free group of rank $n\geq 2$ is never virtually solvable. 
Since $\mathcal{L}(\Gamma)$ is a Fuchsian group, it is well known that 
there can be only finitely many conjugacy classes of parabolic elements. 
(See Section \ref{sub:rcomp}.)

%We denote by $d_{\Isom(\Lspace)}$ the metric induced by the left-invariant Riemannian metric on $\Isom(\Lspace)$. This is a geodesic metric space in the sense that any two points has a shortest path between them. 

%We can consider $\SO(2, 1)$ acting on a hyperbolic space using the Klein model by projectivizing. 
%Note by the standard complete 
%hyperbolic surface theory due to Fricke that for any free subgroup 
%$\Gamma$ of rank $n$, $n\geq 1$, of $\Isom(\Lspace)$, 
%we can deform it to one without 
%parabolics by considering the quotient of the hyperbolic plane under 
%$\Gamma$ as a complete hyperbolic surface and deforming. 
%Actually, the subspace where there are parabolics is an analytic subspace of 
%codimension $i$ where $i$ is the number of parabolic cusps. 
%Since parabolics can only occur for elements parallel to ends, we need to 
%remove some finitely many subspaces of the Fricke space to obtain 
%Fuchsian groups without parabolics. 
%(See Lemma \ref{lem:deformnop}.)  

A Margulis space-time $M$ whose holonomy group contains primitive 
parabolic elements  $\eta_1, \dots, \eta_m$ can be compactified relative to these parabolic elements if we can add a totally geodesic real projective surface
and remove a union of disjoint solid tori with these parabolic elements as holonomies to obtain a compact $3$-manifold whose interior is homeomorphic to $M$.
(See Definition \ref{defn:rcomp}.) This is a fairly strong condition implying tameness
and existence of mutually disjoint end neighborhoods for parabolic holonomy elements. 

A {\em solid torus} in this article (and \cite{CDG22})
is a topological space homeomorphic 
to an annulus times an interval that can be a closed one or a 
half-open one. 

Let $F_n$ be a free group generated by $\gamma_1, \dots, \gamma_n$ 
for $n \geq 2$. 
Recall that we can identify $\Hom(F_n, \Isom(\Lspace))$ with 
$\Isom(\Lspace)^n$ by sending each representation $h$ to 
$(h(\gamma_1), \dots, h(\gamma_n))$.  We fix this identification in this article.
We denote by
$\mathcal{F}^{\rome}_n\subset \Isom(\Lspace)^n$ the subset 
where the projectivization of the linear part of the isometry group generated by 
$(g_1, \dots, g_n) \in \Isom(\Lspace)^n$
acts properly discontinuously 
and freely on the hyperbolic plane. 
We will denote by $\mathcal{F}^{\romel}_n$ the subset of $\mathcal{F}^{\rome}_n$ 
where no element of the group generated by 
$(g_1, \dots, g_n)$ is parabolic. We also regard these as
subsets of $\Hom(F_n, \Isom(\Lspace))$ as well. 

These are spaces with surjective maps to Fricke (or Teichm\"uller) spaces,  
which are well-known to be semialgebraic sets with open dense subsets 
homeomorphic to disjoint unions of cells. See
the beginning of Section \ref{sec:deforming}.
(See \cite{G09} also.) 

We present a short proof of the following openness: 

%We prove the following connectivity to Margulis groups with no parabolics.
\begin{theorem}[Deformation of Margulis space-times] \label{thm:main} 
Let $\Gamma$ be a discrete free group in $\Isom(\Lspace)$ of rank $n \geq 2$ containing a parabolic element. 
Suppose that $\Gamma$ acts freely and 
properly discontinuously on $\Lspace$ so that 
$\Lspace/\Gamma$ is a manifold, and admits a compactification 
relative to parabolic holonomy elements.
Fix free generators $g_1,\dots, g_n$ of $\Gamma$.
Then 
%for any path $g_i(t), i=1, \dots, n$ in $\Isom(\Lspace)$, so that $g_i(0)= g_i$ so that 
there exists a neighborhood $N \subset \mathcal{F}^{\rome}_n$ 
of $(g_1, \dots, g_n) \in \Isom(\Lspace)^n$, 
for each $\mu=(g'_1, \dots, g'_n)\in N$, 
the group $\Gamma_\mu$ 
generated by $\mu$
is a free group of rank $n$ acting properly discontinuously and freely on $\Lspace$. 
Moreover, $\Lspace/\Gamma_\mu$ for $\mu \in N$ 
admits a compactification relative to parabolic holonomy elements. 
%Then there exists a path  $g_i(t)$ for $t \in [0, \eps)$ for some $\eps> 0$  so that 
%$g_i(0)= g_i$ %depending on $\Gamma$ 
%so that $\Gamma_t$ generated by $g_i(t)$, $i=1, \dots, n$, still acts properly
%discontinuously and freely on $\Lspace$ and has no parabolics for $t> 0$. 
%Also, $\Gamma_t$ is a free group  with generators $g_i(t), i=1, \dots, n$ for $t\in [0, \eps)$. 
%so that for  $\Gamma'$ in  $\Isom(\Lspace)$ without parabolic isomorphic to 
%$\Gamma$ with generators $g'_1,\dots, g'_n$ where 
%$d_{\Isom(\Lspace)}(g_i, g'_i) < \eps$ for $i=1, \dots, n$, 
%$\Gamma'$ still acts freely and properly on $\Lspace$.
\end{theorem}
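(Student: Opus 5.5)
The plan is a Carrière-type openness argument on the relative compactification. By hypothesis, $M=\Lspace/\Gamma$ has a relative compactification. That is, there is a compact $3$-manifold $\bar M$ obtained by adding a totally geodesic real projective surface $\Sigma$ at infinity (in the real projective sphere containing $\Lspace$ as an open affine half) and removing disjoint solid tori $T_1,\dots,T_m$, one for each conjugacy class of primitive parabolic $\eta_j$. The interior of $\bar M$ is homeomorphic to $M$.

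First, I view $\bar M$ as a compact real projective manifold with boundary. Its boundary is $\Sigma$ together with the annular boundaries $\partial T_j$. Its holonomy is the projectivized affine holonomy, and the developing map is an embedding onto a $\Gamma$-invariant domain $\Omega\cup\tilde\Sigma$. The boundary pieces over $\partial T_j$ are the boundaries of $\eta_j$-invariant \emph{parabolic regions} constructed in our earlier work.

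The core step is to deform $\bar M$ together with $\Gamma$. For the compact part away from the cusps, the Ehresmann--Thurston principle applies. Take a compact submanifold $K\subset\bar M$ containing $\Sigma$ and everything except small collars $C_j$ of the solid-torus boundaries. Small deformations $\mu\in N$ then give real projective structures on $K$ with holonomy $\Gamma_\mu$, close to the original in the $C^1$ topology on developing maps. Near each cusp I distinguish two cases.
\begin{itemize}
\item If $\mathcal L(g'_\mu(\eta_j))$ stays parabolic, I use the explicit parabolic-region construction from the earlier work. The regions vary continuously with the parabolic affine element, since there is a normal form up to conjugacy. This lets me cap off with a new solid torus boundary compatible with $K$.
\item If $\mathcal L(\eta_j)$ becomes hyperbolic, the linear part is still Fuchsian because $\mu\in\mathcal F^{\rome}_n$. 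Then $\eta_j$ becomes a boundary-parallel hyperbolic element whose axis lies near the old cusp. Here I would show that the perturbed element acts properly on a neighbourhood close to the old parabolic region, bounded by a nearby invariant annulus. This follows from Charette--Drumm type invariant-region estimates: a small perturbation of a parabolic affine transformation with nonzero Margulis-type invariant keeps a ``tube'' on which it acts freely and properly. Either the perturbed tube gives a new solid torus to remove, or the solid torus is no longer needed and the annulus closes into $\Sigma$. This is where the number of parabolic classes can decrease. The increase case (hyperbolic becoming parabolic) cannot occur for elements that were not near cusps, but it can occur for boundary-parallel hyperbolics of $\Gamma$. I would handle that symmetrically by noting that near such elements $\Sigma$ already gives a complete end, which Fuchsian deformation keeps.
\end{itemize}

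Gluing these pieces gives a compact real projective manifold $\bar M_\mu$ with holonomy $\Gamma_\mu$. It has convex (totally geodesic) boundary at infinity and boundary annuli that are locally convex toward the removed tori.

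Next I argue that the developing map $D_\mu$ of $\bar M_\mu$ is an embedding onto the closure of a domain in the projective sphere. This is the partial generalization of Carrière's argument. Because the boundary pieces are locally convex in the appropriate direction and $\bar M_\mu$ is compact, I can extend by the removed regions. These are the parabolic regions, or the nearby tubes, and their $\Gamma_\mu$-translates are disjoint because the regions are thin and translates are controlled by the Fuchsian linear part. The extension gives a closed, locally convex (in the sense of affine completeness in $\Lspace$) structure on $\Lspace$-part. A local-to-global principle (Tietze-type convexity) then shows that $D_\mu$ restricted to the interior is a covering onto $\Lspace$ minus the union of the removed regions. This region is simply connected modulo the regions, so $D_\mu$ is injective. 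Consequently $\Gamma_\mu$ acts properly and freely on $\Lspace$, since the removed solid tori quotient properly by the cyclic parabolic/hyperbolic groups. It is free of rank $n$ because $\pi_1(\bar M_\mu)=F_n$ maps isomorphically via the Fuchsian linear part. The relative compactification of $\Lspace/\Gamma_\mu$ is $\bar M_\mu$ itself.

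I expect the main obstacle to be the cusp step when a parabolic becomes hyperbolic (or vice versa): constructing continuously varying invariant tubes whose boundary annuli remain locally convex and match the Ehresmann--Thurston deformation on the collars. I would first try to use the explicit normal forms of affine parabolics and nearby hyperbolics, with the sign of the Margulis invariant preserved, to write the tubes as explicit families of ruled or quadric surfaces.
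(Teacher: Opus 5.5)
Your deformation step (Ehresmann--Thurston on the compact part, separate treatment of the cusp collars, regluing) matches the paper in outline. The step where you conclude properness, however, has a genuine gap.

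First, you use that the $\Gamma_\mu$-translates of the removed tubes in $\Lspace$ are disjoint, ``because the regions are thin and translates are controlled by the Fuchsian linear part''. A Fuchsian linear part gives no such control, since the same Fuchsian group has non-proper affine deformations. So this disjointness, and your description of the developed image as $\Lspace$ minus the union of the tubes, is essentially the properness you are trying to prove.

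Second, a Tietze-type local-to-global principle turns local convexity into a covering or convexity statement only when some completeness is already known, e.g.\ that geodesic segments in the universal cover extend until they hit the boundary. Compactness of $\bar M_\mu$ does not give this. The deck group acts by affine maps with linear parts in $\SO(2,1)$, which are not Euclidean isometries, so the pulled-back Euclidean metric on the universal cover need not be complete. Hopf manifolds, which are compact and not convex, show the principle fails without such input. Your argument never uses that the linear holonomy lies in $\SO(2,1)$, and that is exactly what the result depends on.

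The paper avoids both problems as follows. The deformed tori are defined intrinsically as $T_i(\mu)=E_i(\mu)/\langle h_\mu(\eta_i)\rangle$ inside the cyclic quotient $\hat T_i(\mu)=(\Lspace\cup\Ss_+\cup\Ss_-)/\langle h_\mu(\eta_i)\rangle$. The annulus is moved by a Lok/Thurston--Morgan isotopy of a compact neighbourhood, uniformly in the parabolic and hyperbolic cases, with no convexity needed. These tori are glued to $M_{\romecr}(\mu)$, giving $M_{\romecrf}(\mu)$ with boundary $\Sigma_\mu$, without any claim about how $\Gamma_\mu$ acts on $\Lspace$.

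Completeness then comes from Theorem~\ref{thm:Carriere}, via Carri\`ere's discompactedness argument:
\begin{enumerate}
\item If the interior of the universal cover were not convex, a triangle in the Kuiper completion would have an edge through an ideal point.
\item A segment toward that point cannot eventually stay in a lifted torus. These are closed in $\Lspace\cup\Ss$, so the segment would end on $\Ss$ rather than at a point of $\Lspace$.
\item Hence it meets translates $g_i^{-1}(F_{\romecr})$ of the compact fundamental domain for unbounded $g_i$.
\item Euclidean $\eps/2$-balls about the points $g_i(p_i)\in F_{\romecr}$ pull back to ellipsoids whose Euclidean diameter is bounded below, because $\SO(2,1)$ has discompactedness $1$. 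This contradicts proper discontinuity.
\end{enumerate}
Convexity plus the limit-set/hemisphere argument then gives $\Omega=\Lspace$. This discompactedness step is the missing idea in your proposal.

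A minor point: near $\Gamma$ a boundary-parallel hyperbolic cannot become parabolic, since there are finitely many such classes and hyperbolicity is an open condition. So your ``increase'' case does not arise.
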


Our major tool is the following generalization on the Marcus conjecture:
\begin{theorem}[Generalization of the completeness of Carri\`ere \cite{Carr89}]\label{thm:Carriere} 
Let $M$ be a flat Lorentzian manifold with free holonomy group of rank $\geq 2$. 
Suppose that $M$ admits a compactification relative to parabolic holonomy elements. 
Then $M$ is complete. 
\end{theorem}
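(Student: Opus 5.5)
The plan is to follow Carri\`ere's strategy: reduce completeness to showing that the developing map $D:\tilde M\to\Lspace$ is a covering map. The compactification relative to parabolic holonomy elements supplies the compactness that Carri\`ere gets from a closed manifold, except near the removed solid tori, where the parabolic dynamics will be controlled directly. Concretely, let $\bar M$ be the compactified $3$-manifold. It is obtained by adding the totally geodesic real projective surface $\Sigma$ at infinity and removing disjoint solid tori $T_1,\dots,T_m$ whose holonomies are the parabolic elements $\eta_1,\dots,\eta_m$. I will work in the real projective setting: embed $\Lspace$ in $\mathbb{RP}^3$ (or $S^3$) as an affine chart, so that $M\cup\Sigma$ minus the tori carries a real projective structure whose developing map extends $D$. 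The boundary $\Sigma$ develops into the plane at infinity, where $\Gamma$ acts through its linear part, i.e.\ as a Fuchsian group on the two copies of $\mathbb H^2$ and their complements.

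The first step handles the compact part. On the compact region $K=\bar M\setminus\bigcup_i \mathrm{int}\,T_i$ I will establish a uniform lower bound $\epsilon>0$ on the radius of embedded affine (Euclidean-metric) balls around points of the universal cover, measured in a fixed auxiliary Riemannian metric compatible with the projective structure near $\Sigma$. This is Carri\`ere's ``discompacity'' idea. Carri\`ere uses that the linear holonomy has discompacity $\le 1$, and that holds here because the linear part lies in $\SO(2,1)$, whose projective action on the plane at infinity has rank one. The point is that geodesics in $\tilde M$ that fail to be complete must accumulate somewhere. Using the compactness of $K$, I will show that an incomplete geodesic either leaves through $\Sigma$, which is impossible in finite affine time since $\Sigma$ lies at infinity, or enters a neighborhood of some $T_i$.

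The second step, which I expect to be the main obstacle, treats the solid tori neighborhoods. Here I will use the parabolic part of our earlier work. Each end neighborhood of $T_i$ lifts to a region in $\Lspace$ invariant under the cyclic group $\langle\eta_i\rangle$, and this region is explicitly modeled: it is bounded by an $\eta_i$-invariant surface and lies in a standard parabolic region on which $\langle \eta_i\rangle$ acts properly. I will show that the developing map restricted to a lift of the complement of $T_i$ in its neighborhood is injective onto such a standard region, with convex-enough boundary. Then any geodesic entering it either exits or goes to infinity in affine parameter. The delicate point is the uniform control as the geodesic winds around the parabolic direction: translations along $\eta_i$ distort lengths polynomially rather than exponentially. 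I would first try to bound the affine parameter via the explicit quadratic form of $\eta_i^k$ acting on the standard region.

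Combining the two steps gives geodesic completeness: every affine geodesic in $\tilde M$ extends indefinitely. Hence $D$ is a local isometry with the path-lifting property, so it is a covering onto the simply connected $\Lspace$, and therefore a diffeomorphism. This shows that $M=\Lspace/\Gamma$ is complete. Freeness of the holonomy of rank $\ge2$ is used to guarantee, via \cite{FG83}, that the linear part is a discrete Fuchsian group, so that the dynamics at $\Sigma$ is as described.
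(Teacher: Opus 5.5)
There is a genuine gap in your first step, and that step is where all the difficulty lies. Compactness of $K$ together with a uniform radius of embedded charts does not stop an incomplete geodesic from staying in $K$, or from returning to it forever. The Hopf manifold $(\mathbb{R}^3\setminus\{0\})/\langle x\mapsto 2x\rangle$ is compact, yet its radial geodesics are incomplete. Deck transformations act by Lorentzian, not Euclidean, isometries, so the pulled-back Euclidean metric is not invariant. A uniform radius $\eps$ of embedded Euclidean balls therefore exists only on a compact fundamental domain $F$ (this is Lemma~\ref{lem:compact}). An invariant auxiliary Riemannian metric gives no control of the affine parameter.

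The case you must exclude is an incomplete segment with points $p_i$ tending to its ideal endpoint, together with an unbounded sequence of deck transformations $g_i$ such that $g_i(p_i)\in F$. Pulling back the $\eps$-ball $E_i$ about $g_i(p_i)$ gives an ellipsoid $g_i^{-1}(E_i)$ about $p_i$, with semi-axes of order $\eps\lambda_i,\eps,\eps\lambda_i^{-1}$. Its half-length along the fixed developed direction $w$ of your geodesic is $\eps/|\mathcal{L}(h(g_i))w|$, which can tend to $0$. So discompacity $1$ gives nothing for a single geodesic. What it does give is that $g_i^{-1}(E_i)$ meets every $2$-plane through $p_i$ in a segment centered at $p_i$ of half-length at least $\eps$. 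This is why Carri\`ere's argument, and the paper's, is run on a triangle in the Kuiper completion with a single ideal point, interior to one edge. Points of the triangle at a fixed distance from $p_i$ then converge to a non-ideal point while their $g_i$-images stay in a compact set, which contradicts proper discontinuity. The outcome is only \emph{convexity}: $\dev$ embeds the universal cover of $M$ onto a convex domain $\Omega\subset\Lspace$.

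Two further ingredients are missing. First, convexity does not by itself give completeness, and ``$\Sigma$ lies at infinity'' only disposes of geodesics escaping through $\Sigma$. The paper uses $\Sigma$ more substantially. Since $\Ss_\pm$ and the strips $\tilde D_i$ lie in the developed image of the universal cover of $M\cup\Sigma$, the convex set $\clo(\Omega)$ contains the limit set in $\partial\Ss_\pm$ and the arcs $\zeta(x)$ over limit points $x$. Two disjoint such arcs lie in no closed hemisphere, so $\clo(\Omega)=\clo(\Lspace)$ and hence $\Omega=\Lspace$. Nothing in your plan plays this role.

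Second, your parabolic step targets the wrong difficulty, and no polynomial estimates for $\eta_i^k$ are needed. Each component of the preimage of $T_i$ in the universal cover develops injectively onto a closed subset of $\Lspace\cup\Ss$. Hence an incomplete segment cannot eventually stay in one component, since its limit would then be a point of that component or a point of $\Ss$. The real role of the tori is to force the segment to re-enter the compact part infinitely often, which is exactly what produces the sequence $g_i$ above. Your ``combining the two steps'' leaves this recurrent case, which is the main one, untreated.
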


Note that we can surprisingly increase or decrease the number of cusps in the groups.
The final statement says that the subspace in $\mathcal{F}^{\rome}_n$ 
of relatively compactified Margulis space-times is open.
%Also, the result supports the crooked decomposition conjecture of 
%Charette, Drumm, and Goldman since Proposition \ref{prop:cr} holds. 

In \cite{CDG22}, we proved that the relative compactification always exists. 
However, the present paper depends only on the results about parabolic regions in Section 3 and Appendix A of \cite{CDG22}, and not on the full set of results. 
%Also, if a finite number of mutually disjoint crooked planes decompose 
%a Margulis space-time, then a relative compactification will exist.
%(See Proposition \ref{prop:crirc}.)

When $\Gamma$ has no parabolic elements, this openness follows from 
the main theorem of Goldman-Labourie-Margulis \cite{GLM09}.
We prove the corresponding statements 
in Theorem \ref{thm:nonpara} below. 
%However, we question the openness of the property of proper actions for 
%$\mathcal{F}^{\rome}_n$ since 
Our result is moderately surprising since Properness 
Criterion 1.1 given in \cite{CDG22} does not
correspond to conditions based on compact sets. 
Our result suggests that 
we might be able to find a compact set of criteria. 
%See also Remark \ref{rem:paradef}.

%Choi-Goldman \cite{CG17} and 
%Danciger, Gu\'eritaud, and Kassel \cite{DGK}, \cite{DGK16}.  

We note the announcement by Danciger, Kassel, and Gu\'eritaud \cite{DGKp} 
proving the Charette-Drumm-Goldman conjecture that 
$\Gamma$ as above has a fundamental domain bounded by crooked planes.
If $\Gamma$ admits no parabolic element, then the conjecture 
has already been proved by the same people \cite{DGK16}. 
%We believe that this would imply Theorem \ref{thm:main}
%by Proposition \ref{prop:cr}. 
Our approach is different in that we use the relative compactification, 
and we do not address this conjecture here. Also, we 
take the point of view of 
the geometric structures where we do not assume that the action is proper.  
%Define Crooked plane here.. 

If the conjecture holds, our result also follows by perturbing the fundamental
domain bounded by crooked planes and Theorem 3.5 of Drumm \cite{Drumm92}.
Hence, our result supports the above conjecture.
Also, a crooked plane decomposition implies the existence of relative compactification. 
In another paper, we show that a Margulis space-time admits a decomposition by 
quasi-disjoint crooked planes.  These results are currently 
in preparation.

%Let $\Lspace$ be the affine space 
%associated with the oriented vector space  
%$\bR^{2, 1}$ with the standard $(2, 1)$-signature bilinear form

We give an outline of the proof. 
In Section \ref{sec:preliminary}, we review 
the parabolic actions on $\Lspace$, affine, flat Lorentzian, 
and real projective structures on manifolds, 
and relative compactification of Margulis-space times. 

In Section \ref{sec:deforming}, we discuss 
the deformations of Margulis space-times. 
%For deformations of Margulis space-times with parabolics, we use the earlier result from %\cite{CDG22} and 
We use the relative compactification $M_{\romec}$ 
of our Margulis space-time $M$ to obtain a complete manifold. 
This means adding a real projective surface $\Sigma$ 
which is a union of two isometric 
complete hyperbolic surfaces $\Sigma_+$ and $\Sigma_-$ with cusps and some annuli. 
We remove the solid tori corresponding to parabolic holonomy elements 
to obtain a compact $3$-manifold $M_{\romecr}$.
The boundary of each solid torus is an annulus ending in the boundary of 
a cusp neighborhood in $\Sigma_+$ and the corresponding 
cusp neighborhood in $\Sigma_-$. 
(See Definition \ref{defn:rcomp}.)

%See Section \ref{sub:before} for this. 

Then we deform $\Gamma$:
We deform the solid tori  
first in Section \ref{sub:deformtori}. 
We deform $M_{\romecr}$ 
to a compact $3$-manifold $M'_{\romecr}$ with corresponding holonomy
using the theory of deformation of geometric structures by Ehresmann-Thurston principle. 
We glue back the solid tori accordingly deformed to obtain
a compact $3$-manifold $M'_{\romecrf}$ with a real projective structure
in Section \ref{sub:deform}. 
We complete the proof of Theorem \ref{thm:main} by 
Theorem \ref{thm:Carriere} proved in the next subsections.
%The boundary can be filled in.
%The boundary of the solid tori now becomes convex surface ending at two closed geodesics. 
%The amount that you can deform is determined by the boundary still bounding 
%the solid tori. 
%\marginpar{\tiny This is just standard} 
%Then $M_{\romecr}$ to $M'_{\romecr}$ using the boundary deformation here which 
%uses the work of Lok. 

%to obtain a closed real projective surface. 
%We obtain 

Now, we prove Theorem \ref{thm:Carriere} in Sections \ref{sub:Carriere}
and \ref{sub:generalcase}.
The universal cover of the relative compactification $M_{\romec}$ 
inherits a real projective structure. 
We show that the interior $M$ must be convex 
and complete. 
The argument is not substantially different from that of Carri\`ere: 
we still have a compact domain that 
we can use in place of a fundamental domain. 
Then a consideration of the limit set shows that the interior is 
a Margulis space-time. 

%This proves Theorem \ref{thm:main}. 

\subsection{Acknowledgment} 
We thank Virginie Charette, Jeffrey Danciger, Todd Drumm, and William Goldman
for many discussions.

\section{Preliminary} \label{sec:preliminary}

\subsection{Actions of parabolic groups} \label{sub:action}

A {\em parabolic element} is an element of $\Isom(\Lspace)$ whose linear
part has a single eigenvalue and is not diagonalizable. 

\begin{definition} 
	Let $N$ be a nilpotent skew adjoint endomorphism. 
	We will call a frame $\va, \vb, \vc \in \bR^{2, 1}$ satisfying the following properties: 
	\begin{itemize} 
		\item $\vb = N(\va), \vc = N(\vb)$.
		\item $\va, \vc$ are null, and $\vb$ is a unit-norm space-like vector.  
		\item $\Bs(\va, \vb)=0=\Bs(\vb, \vc), \Bs(\va, \vc) = -1$. 
	\end{itemize}
	an {\em adopted frame} of $N$.  %\hypertarget{term-pcs}{{\em adopted frame}} of $N$.
	We say that $N$ is {\em accordant} if an adopted frame has the standard orientation. 
\end{definition}
Note that the orientation of an adopted frame is determined by $N$. Moreover, $-N$ is accordant if and only if $N$ is not accordant.

\begin{corollary}[Corollary 3.8 in \cite{CDG22}] \label{cor:parac}
	Let $N$ be a nilpotent skew adjoint endomorphism. Then
the Lorentzian vectors $\va, \vb, \vc$ 
 satisfying the properties 
\begin{itemize} 
\item  $\Bs(\va, \vb)=0=\Bs(\vb, \vc), \Bs(\va, \vc) = -1$, 
\item $ \vc = N(\vb), \vb = N(\va)$, and 
\item $\vb$ is a unit space-like vector, $\vc \in \ker N$ is causally null,  and $\va$ is null
\end{itemize}
are determined up to changes 
$\vb \ra \vb + c_0 \vc, \va \ra \va + c_0 \vb + \frac{c_0^2}{2} \vc$, $c_0\in \bR$.
%with respect to 
%%the skew-symmetric nilpotent endomorphism $N$ and 
$\Bs: V \times V \ra \bR$.  
%\marginpar{\tiny Nov 4, second "with respect to" changed to "and"}
Furthermore, the adopted frame for $N$ is determined only up to these changes. 
\end{corollary}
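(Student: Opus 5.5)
Since $N$ is nilpotent and skew adjoint on the $3$-dimensional Lorentzian space, $N$ is nonzero (otherwise no frame can exist) with $N^3=0$ and $N^2\neq 0$, and $\ker N$ is a null line. I would first show existence, then pin down how much freedom remains, using only the relations $\vb=N\va$, $\vc=N\vb$ and the bilinear identities together with skew adjointness $\Bs(Nx,y)=-\Bs(x,Ny)$.

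\emph{Existence.} Take $\va$ outside $\ker N^2$. Then $\va,N\va,N^2\va$ are linearly independent. Skew adjointness gives $\Bs(N^2\va,N\va)=-\Bs(N\va,N^2\va)$, so $N\va\perp N^2\va$. It also gives $\Bs(N^2\va,N^2\va)=\Bs(\va,N^4\va)=0$, so $N^2\va$ is null, and $\Bs(\va,N^2\va)=-\Bs(N\va,N\va)$. This quantity is nonzero: otherwise $N^2\va$ would be orthogonal to the whole basis, contradicting nondegeneracy. Hence $N\va$ is space-like with $\Bs(N\va,N\va)=-\Bs(\va,N^2\va)>0$.

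Rescaling $\va$ by $t$ scales this squared norm by $t^2$, so we may normalize $\vb=N\va$ to unit length. Then $\Bs(\va,\vc)=-1$ holds automatically. Finally we replace $\va$ by $\va+s\vb+u\vc$. This leaves $\Bs(\vb,\vb)$ unchanged, since $N\vc=0$ and the new $N\va$ is $\vb+s\vc$, which has the same norm. Choosing $s=0$ and $u$ suitably makes $\va$ null, because $\Bs(\va+u\vc,\va+u\vc)=\Bs(\va,\va)-2u$. Also $\Bs(\va,\vb)=\Bs(\va,N\va)=0$ by skew adjointness, and $\vc\in\ker N$ since $N^3=0$.

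\emph{Uniqueness up to the stated changes.} Let $(\va',\vb',\vc')$ be another such frame. The vector $\vc'$ lies in $\ker N$, which is the line spanned by $\vc$. Since $\vc'=N^2\va'$, write $\va'=\alpha\va+\beta\vb+\gamma\vc$; then $\vc'=\alpha\vc$. Unit length of $\vb'=N\va'=\alpha\vb+\beta\vc$ gives $\alpha^2=1$.

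The sign is the main obstacle. The condition $\alpha=-1$ is \emph{not} excluded by the listed bilinear identities: replacing $(\va,\vb,\vc)$ by $(-\va,-\vb,-\vc)$ preserves every condition. So the intended statement must implicitly fix $\vc$ via a causal or time orientation. I would read ``$\vc$ causally null'' as $\vc$ lying in the future null cone with the fixed time-orientation, which forces $\alpha=1$. I would state this convention explicitly at this point.

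With $\alpha=1$, set $c_0=\beta$, so that $\vb'=\vb+c_0\vc$. Nullity of $\va'$ gives
\[0=\Bs(\va',\va')=2\gamma\Bs(\va,\vc)+\beta^2\Bs(\vb,\vb)=-2\gamma+c_0^2,\]
so $\gamma=c_0^2/2$. Conversely, a direct check shows each such change preserves all the conditions, so the frames form exactly this one-parameter family.

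The final sentence of the statement, about the adopted frame, is then immediate: the definition of an adopted frame imposes the same conditions. The orientation of the frame is unchanged under the changes, since the change-of-basis matrix is unipotent. This also justifies that accordance is well defined.
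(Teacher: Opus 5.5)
The paper does not prove this statement. It imports it verbatim as Corollary~3.8 of \cite{CDG22}, so there is no in-paper argument to compare your route against. On its own terms your argument is a correct, self-contained linear-algebra proof, but one inference needs an extra line.

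\textbf{Positivity of $\Bs(N\va,N\va)$.} Nondegeneracy only gives $\Bs(\va,N^2\va)\neq 0$, yet you write ``hence $N\va$ is space-like with $\Bs(N\va,N\va)>0$.'' The sign is exactly what you need to rescale to $\Bs(\vb,\vb)=1$. It follows from what you proved just before it:
\begin{itemize}
\item $N\va\perp N^2\va$, where $N^2\va$ is a nonzero null vector not proportional to $N\va$;
\item in signature $(2,1)$ the orthogonal complement of a null line is positive semidefinite with radical that line;
\item equivalently, a time-like vector has a positive definite orthogonal complement, so it cannot be orthogonal to a nonzero null vector.
\end{itemize}

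\textbf{Two smaller omissions.} First, $N^2\neq 0$ for $N\neq 0$ deserves a sentence. The form $\Bs(N\cdot,\cdot)$ is alternating, so $N$ has even rank, hence rank $2$, a one-dimensional kernel, and a single Jordan block. Second, in the existence part, replace $(\va,\vb,\vc)$ by $(-\va,-\vb,-\vc)$ if necessary so that $\vc$ has the required time-orientation.

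\textbf{The sign issue.} Your treatment is right, and it is a genuine point about the statement. The frame $(-\va,-\vb,-\vc)$ satisfies every listed identity, but the stated changes fix $\vc$, so they never reach it. The paper's remark after the definition of adopted frame confirms your reading. It says the orientation of an adopted frame is determined by $N$, and that $-N$ is accordant if and only if $N$ is not. Both would be false under this flip, which has determinant $-1$. So ``causally null'' must carry a time-orientation condition. The same convention must also be read into the definition of an adopted frame, which literally only says ``null.''

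\textbf{A small simplification.} The allowed changes are exactly $(\va,\vb,\vc)\mapsto(\exp(c_0N)\va,\exp(c_0N)\vb,\exp(c_0N)\vc)$. Since $\exp(c_0N)\in\SO(2, 1)_+$ commutes with $N$, this one observation gives three things at once: the converse check, preservation of time-orientation, and preservation of orientation.
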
 

In an adopted frame, 
the bilinear form $\Bs$ has matrix
\begin{equation}\label{eqn:Bform}  
	\left(
	\begin{array}{ccc}
		0 & 0 & -1 \\
		0 & 1 & 0 \\
		-1 & 0 & 0
	\end{array}
	\right).
\end{equation}

Let $\gamma$ be a parabolic transformation $\Lspace \ra \Lspace$.
Then it is in a one-parameter subgroup 
	\begin{equation}\label{eqn:Phit0}
		\Phi(t) := \exp t
		\left(
		\begin{array}{cc}
			N & \vec{v}_\gamma \\
			0 & 0 
		\end{array}
		\right), t\in \bR, \vec{v}_\gamma \in \bR^{2, 1},
\end{equation} 
for an accordant nilpotent skew adjoint endomorphism $N$. 
In the adopted frame with a choice of origin, we
can write 
\begin{equation}\label{eqn:Phit}
	\Phi(t) := \exp t
	\left(
	\begin{array}{cccc}
		0  & 1  & 0  &0\\
		0  & 0  & 1  & 0\\
		0  & 0  &  0 & \mu \\
		0 &  0 & 0 & 0 
	\end{array}
	\right)
	= 
	\left(
	\begin{array}{cccc}
		1 & t  & t^{2}/2 & \mu t^{3}/6  \\
		0   & 1  & t  & \mu t^{2}/2  \\
		0  &  0  & 1 & \mu t \\ 
		0 & 0 & 0 & 1   
	\end{array}
	\right)
\end{equation} 
for $\mu \in \bR$. 
This was proved in Section 3.1 of \cite{CDG22}. 

\subsection{Geometric structures on manifolds} \label{sub:geostr}

A pair $(G, X)$ of a Lie group $G$ acting on the space $X$ faithfully and 
transitively is called the $(G, X)$-geometry. 
A $(G, X)$-structure on a manifold $M$ is a maximal atlas of charts to $X$ 
such that 
the transition maps are in $G$. 

The geometry $(\Aff(\bR^n), \bR^n)$ for $n \geq 1$ is 
called the {\em affine geometry}, and a geometric structure on a manifold modeled on
the pair is called an {\em affine structure}. 

The geometry $(\Isom(\Lspace), \Lspace)$ is 
called the {\em Lorentz geometry}, and a geometric structure on a manifold modeled on
the pair is called a {\em flat Lorentzian structure}. Clearly, a flat Lorentzian structure 
give us a canonical affine structure of dimension $3$.  

The geometry $(\SL_\pm(n+1, \bR), \SI^n)$ for $n\geq 1$ 
is called the {\em real projective geometry},
and a geometric structure on a manifold modeled 
on the pair is called a {\em real projective structure}. 

We have inclusions
\[        
(\Isom(\Lspace), \Lspace)\subset (\Aff(\bR^3), \bR^3)
\subset (\SL_\pm(4, \bR), \SI^3).
\]
Hence, a flat Lorentzian manifold, and so a Margulis space-time, 
can be considered an affine $3$-manifold 
and also a real projective $3$-manifold. 

Let $N$ be a $n$-dimensional 
manifold with a universal cover $\tilde N$ and a deck transformation 
group $\pi_1(N)$. 
Given an affine (resp. projective) structure on $N$, there exists an immersion 
$\dev: \tilde N \ra X$ for $X = \bR^n$ (resp. $X= \SI^n$) and a homomorphism 
$h: \pi_1(N) \ra \Aff(\bR^n)$ (resp. $\ra \SL_\pm(n+1, \bR)$) satisfying 
$\dev \circ \gamma = h(\gamma) \circ \dev$ for each $\gamma \in \pi_1(N)$. 
Here, $\dev$ is called a {\em developing map}, 
$h$ is called a {\em holonomy homomorphism}, the image of $h$ is 
called a {\em holonomy group}, and $h(\gamma)$ for a closed curve 
$\gamma$ is called the {\em holonomy} of $\gamma$. 
(For details, see \cite{psconv}.) 

Given two affine manifolds $N_1$ and $N_2$, 
an affine map $N_1 \ra N_2$ is a map that locally sends a chart to a chart. 
A {\em complete affine manifold} is an affine manifold 
affinely diffeomorphic to a quotient manifold of $\bR^n$ under a properly discontinous and free action 
of an affine group. An affine manifold is complete if and only if 
it is affinely diffeomorphic to $\bR^n/\Gamma$ for a holonomy group $\Gamma$.

A {\em directed ray} in an affine (resp. projective) manifold is an arc
equipped with an orientation which maps to an affine (resp. projective) geodesic under the composition with the developing map.

\subsection{Definition of compactification relative to parabolic holonomy elements}\label{sub:rcomp}
The space of directed rays in $\bR^4$ is identified with $\SI^3$.
Let $\mathcal{A}: \SI^3 \ra \SI^3$ be a map sending $x$ to $-x$. 
For any set or point $X$, we denote the image $\mathcal{A}(X)$ by $X_-$.
Let us give coordinates $x_i, i=1, \dots, 4$ in $\bR^4$. 
The Lorentzian space $\Lspace$ is identified with the upper open hemisphere
given by $x_4> 0$, or the hyperplane given by $x_4=1$.
$\Lspace$ has coordinate functions $x_1, x_2, x_3$
with the Lorentz form $x_1^2 + x_2^2 - x_3^2$. 
This identification map is the stereographic projection from the origin.
%Also, twice the Euclidean metric based on the coordinates $x_1, x_2, x_3$ 
%dominates the spherical metric in the identification as we can verify by 
%a rough computation. 

The boundary of $\Lspace$ in $\SI^3$ 
is a $2$-sphere to be denoted by $\Ss$.
The space of future time-like directions is 
an open disk to be denoted by $\Ss_+$ 
and the space of past directions is a disk to be denoted by $\Ss_-$. 
We consider $\Isom(\Lspace)$ as a subgroup of 
$\SL_\pm(4, \bR)$ acting on $\SI^3$.

We regard $\Ss_+$ as the Klein model of the hyperbolic plane. 
Now $\SO(2, 1)$ acts on $\Ss_+\cup \Ss_-$, and $\SO(2, 1)_+$ acts 
on $\Ss_+$ as the isometry group of $\Ss_+$ with the hyperbolic metric. 
Also, the projectivization $\SO(2, 1)/\pm \Idd$ of $\SO(2, 1)$ acts on $\Ss_+$ 
as the full hyperbolic isometry group. 
We can write $\Ss = \Ss_+ \cup \Ss_- \cup \Ss_0$ as a disjoint union where 
$\Ss_0$ is the space of directions of the directed space-like rays and 
directed null rays.
$\Ss$ inherits a boundary orientation from
the closure of $\Lspace$, this induces an orientation on 
$\Ss_+$, and the circle 
$\partial \Ss_+$ is given the boundary orientation from $\Ss_+$, 
where $\partial \Ss_+$ is the topological boundary of $\Ss_+$ in $\Ss$. 
%$\Ss_+$ and $\Ss_-$, and $\Ss_0$.  

{\em Geodesic segments} in $\Ss$ are connected arcs in great circles. 
For $x \in \partial \Ss_+$, we define $\zeta(x)$ to be the closed 
geodesic segment in $\Ss_0$ from $x$ to $x_-$ tangent to 
$\partial \Ss_+$ in the oriented direction.  
For any arc $\alpha$ in $\partial \Ss_+$, we define $Z(\alpha)$ as 
the union 
of $\zeta(x)$ for $x\in \alpha$. This is a disk with two boundary components 
$\alpha$ and $\alpha_-$. 

Let $\Gamma$ be the image of $\rho:F_n \ra \Isom(\Lspace)$ acting properly discontinuously
on $\Ss_+\cup \Ss_-$. 
Since $\SO(2,1)$ has two components, there is 
a subgroup $\Gamma'$  of index at most $2$, which 
acts on each of $\Ss_+$ and $\Ss_-$. 
Let $\Sigma_+ := \Ss_+/\Gamma'$ and $\Sigma_- := \Ss_-/\Gamma'$,
which are real projective surfaces. 
$\Gamma'$ acts as a Fuchsian group acting on the hyperbolic plane 
$\Ss_+$ and 
acts properly discontinuously and cocompactly 
on $\Ss_+ \cup \beta$ for a union $\beta$ of arcs in $\partial \Ss_+$. 
Then $\Sigma_+$ is an open dense surface in a surface 
$\hat \Sigma_+:= (\Ss_+\cup \beta)/\Gamma'$ with cusp ends corresponding 
to the parabolic elements in $\Gamma'$. 
The union $\beta$ covers a finite number of boundary components $\beta_1, \dots, \beta_p$ of $\hat \Sigma_+$. 
We choose a single lift $\tilde \beta_i$ for each $i$. 
$\beta$ is a union of images of $\tilde \beta_i, i=1, \dots, p$ under $\Gamma'$. 
%There are also mutually disjoint cusp neighborhoods $E_i$, $i=1, \dots, k$. 
%For each $i$, we choose a horoball $\tilde E_i$ covering $E_i$, $i=1, \dots, k$. 
%Also, $E_i$, $i=1,\dots, k$, form cusp neighborhoods in $\Sigma_+$, and there are corresponding 
%cusp neighborhoods $E_{i, -}$ in $\Sigma_-$ for $i=1, \dots, k$. 
%We denote by $E$ the union of $E_i, i=1, \dots, k$ and $E_-$ the union of $E_{i,-}$ for $i=1, \dots, k$. 

Let $\partial \Ss_+$ denote the boundary of $\Ss_+$ 
equipped with its boundary orientation.
Then there is a domain $\tilde D_i= Z(\tilde \beta_i)$ in $\Ss_0$ for $i=1, \dots, p$ that is a union of 
maximal geodesics in $\Ss_0$ tangent to $\partial \Ss_+$ in the boundary direction. 
We also have 
$\partial \tilde D_i = \tilde \beta \cup \tilde \beta_{i, -}$. 
Then $\Gamma$ acts properly discontinuously on 
$\bigcup_{i=1,\dots, p}\Gamma(\tilde D_i)$. 
On each $\tilde \beta_i$, there is a unique primitive element $\kappa_i$
acting on it in the orientation direction, 
which also acts on $\tilde D_i$. 

Define $\tilde \Sigma := \Ss_+ \cup \Ss_- \cup \bigcup_{i=1}^p \Gamma({\tilde D_i})$.
Then $\Gamma$ acts properly discontinuously, 
and we set $\Sigma := \tilde \Sigma/\Gamma$.  
We say that $\Sigma$ is the {\em canonical ideal real projective surface} for $\Gamma$. 
(See Section 5.1 of \cite{CDG22} and Theorem 5.3 of \cite{CG17}.)
Then $\Sigma$ contains the image of union of 
$\Sigma_+$ and $\Sigma_-$, and its complement is 
a union of annuli or M\"obius bands $A_i$ for $i=1, \dots, p$, each of 
which is an image of ${\tilde D_i}$.

We define $\Sigma' := (\Ss_+ \cup \Ss_-)/\Gamma$. 
Then $\Sigma$ is a union of 
\[\Sigma' \hbox{ and } A_i, i=1, \dots, p,\] 
where $\Sigma'$ is isometric to the union of one or two copies of $\Sigma_+$. 
%\begin{itemize} 
%\item $\Sigma_+$,
%\item $\Sigma_-$, 
%\item $A_i$, $i=1, \dots, m$. 
%\end{itemize}

A {\em bordification} of an open $n$-manifold $N$ by an $(n-1)$-manifold $N'$
is a manifold $N''$ whose manifold boundary equals $N'$ and its interior equals $N$. 

\begin{definition}[Compactification relative to parabolic holonomy elements] \label{defn:rcomp}
We say that a flat Lorentzian manifold $M$ with parabolic holonomy elements 
admits a {\em compactification relative to parabolic holonomy elements} if 
the following hold:
\begin{itemize}
%\item $\pi_1(M)$ projectively acts on the universal cover $\tilde M$ of $M$ as 
%a deck transformation group. 
\item $\pi_1(M)$ acts properly discontinuously and freely on
the bordification $\tilde M \cup \tilde \Sigma$ of $\tilde M$ 
(resp. $\tilde M \cup \mathcal{A}(\tilde \Sigma)$), and the quotient is a union of $M$ and a surface $\Sigma$. 
\item The real projective structure on $\tilde M \cup \tilde \Sigma$ extends 
that of $\tilde M$ and $\pi_1(M)$ acts projectively on $\tilde M \cup \tilde \Sigma$. 
\item We have a pairwise disjoint collection of solid tori
$T_1, \dots, T_m$ closed in $M_\romec:= M \cup \Sigma$, 
whose holonomy groups are parabolic and contain
representatives from each of the conjugacy classes of all parabolic elements. 
%for two complete hyperbolic surfaces $\Sigma_+, \Sigma_-$ with cusps  and 
\item A compact $3$-manifold $M_\romecr:= M_\romec - \bigcup_{i=1}^m T_i^o$ is a deformation retract of $M_\romec$. 
\end{itemize}
\end{definition}
%Note that $M_c$ has a real projective structure since each element of $\Gamma$ %is 
%considered to be in $\SL_\pm(4, \bR)$. 

We may also assume that the topological 
boundary of $T_i$ in $M_\romec$ for each $i$  is a surface ruled by time-like geodesics and invariant 
under a one-parameter group of parabolic elements. 
This can be arranged, if necessary, by Theorem 3.14 of \cite{CDG22}:
This procedure can be accomplished inside any such torus with a parabolic holonomy group
since each component of the preimage of $T_i$ in $\Lspace \cup \tilde \Sigma$ is a neighborhood of $\zeta(p)$ for a fixed point $p$ of a parabolic element of $\Gamma$. 
(See Section 3.3, 
Appendix A of \cite{CDG22}, and Section 5.1.1 of \cite{CDG22}.)

 Suppose that
 $\Gamma \subset \Isom(\Lspace)$ acts properly discontinuously
    and freely. Then Margulis invariants and Charette-Drumm invariants 
    are either all be positive or all negative by Theorem 4.1 of Charette-Drumm \cite{CD05}.

    If the invariants are positive, then we use $\tilde \Sigma$ 
    in Definition \ref{defn:rcomp}; otherwise, we use 
    $\mathcal{A}(\tilde \Sigma)$.

    If all the invariants are negative, we reverse the orientation of 
$\Lspace$ so that the invariants become positive. 
    We assume the following for the rest of the paper. 
\begin{hypothesis}[Positive Invariants]\label{hyp:positive}
   If a subgroup  $\Gamma \subset \Isom(\Lspace)$ acts properly discontinuously and freely on
   $\Lspace$, then all Margulis invariants and Charette-Drumm invariants are positive. 
\end{hypothesis}

\section{Deforming} \label{sec:deforming} 

We will prove Theorem \ref{thm:main} in this section. 
Let a Margulis group $\Gamma \subset \Isom(\Lspace)$ be 
freely generated by $g_1, \dots, g_n$ satisfying Hypothesis \ref{hyp:positive}.
%Since $\Gamma$ is free, we may suppose that 
%$g_1, \dots, g_n$ are free generators. 
%We suppose that $\Gamma$ has parabolic elements. 
%We will identify $\pi_1(M)$ with $\Gamma$, and 
%hence with the free group $F_n$ of rank $n$. 
%Note that $\Hom(F_n, \Isom(\Lspace))$ can be identified with 
%$\Isom(\Lspace)^n$ by sending 
%\[\rho \mapsto (\rho(g_1), \rho(g_2), \dots, \rho(g_n)).\] 
Since $\Hom(F_n, \Isom(\Lspace)) = \Isom(\Lspace)^n$ is a real algebraic set, 
we use the product topology, 
under which it is locally path-connected geodesic-metric 
space induced from a Riemannian metric.  For $\mu$ in $\Isom(\Lspace)^n$,  
we define $\Gamma_\mu$ as an image of the holonomy 
homomorphism $h_\mu: F_n \ra \Isom(\Lspace)$
where $(\gamma_1, \dots, \gamma_n) \mapsto \mu$. 

Let $\Pi: \SO(2, 1) \ra \PO(2, 1)=\SO(2, 1)/\{\pm \Idd\}$ denote the quotient homomorphism.
A representation $h: F_n \ra \Isom(\Lspace)$ is an {\em affine deformation of a Fuchsian representation} if
$\Pi \circ \mathcal{L}\circ h$ is a Fuchsian representation.

%this will prove our theorem. 

We note that the space of discrete faithful characters
\[\mathfrak{F}_n\subset \Hom(F_n, \PO(2, 1))/\PO(2, 1)\] 
is homeomorphic to a union of open cells with some boundary points.
We call $\mathfrak{F}_n$ the {\em Fricke subspace}. 
This is a semialgebraic set where a dense open set is a disjoint union of 
open cells, and 
boundary strata consist of representations with parabolic elements. 
Here only the boundary-parallel homotopy classes can map to parabolic elements by 
the theory of Fricke. (See \cite{G09} where this is essentially proved.) 
Of course, different open components 
consist of characters of representations of the fundamental groups of 
surfaces with distinct topological structures. 

We recall the standard cone neighborhood theory following from 
the triangulations of the semialgebraic sets as in Theorem 9.2.1 of  \cite{Bochnak}: 
We define a {\em cone over a semialgebraic set $X$} to be a subspace
homeomorphic  to a product $X\times [0, 1]$ with $X\times \{0\}$ 
collapsed to a point. The image of $X \times \{0\}$ is called the 
{\em cone point}.
A point $x$ of a semi-algebraic set has a neighborhood with a semi-algebraic homeomorphism to a cone $C$ over a semi-algebraic set $S$ 
with the cone point $x$. 
%We denote by $D'_n$ the subset $\mathcal{L}^{-1}(D_n)\subset 
%\Hom(F_n, \Iso(\Lspace))$. 

\begin{lemma} \label{lem:deformnop} 
Let $\rho:F_n \ra \Isom(\Lspace)$ be an affine deformation of a Fuchsian representation whose image contains parabolic elements.
%where the image of 
%$\mathcal{L}\circ \rho$ is a free group with a parabolic element. 
Then $\mathcal{F}^{\rome}$ and $\mathcal{F}^{\romel}$ are semialgebraic sets. 
There is a cone neighborhood $N$ of $\rho$ in $\mathcal{F}^{\rome}_n$ where 
the subset $N'$ of $N$ consisting of representations without parabolic image elements is 
an open dense cone, which is a complement of a semialgebraic subset of codimension 
$\geq 1$ in $N$ passing through $\rho$. 
%Then the subset of elements $x$ of $D'_n$
%$\Gamma_x$ has no parabolic is an open dense subset of $D'_n$, which is the complement of finite union of codimension $1$ semi-algebraic set. 
%there is a path of affine deformations of Fuchsian representations $\rho_t$, $t \in [0, 1]$ where,
%$\rho_0 = \rho$ and $\rho_t$ has no parabolic for $t> 0$. 
\end{lemma}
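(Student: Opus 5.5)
The plan is to reduce everything to the linear parts and then pull back along $\mathcal{L}$. Consider the map
\[
\Psi := \Pi\circ\mathcal{L}\colon \Isom(\Lspace)^n \ra \PO(2,1)^n .
\]
It is a polynomial (algebraic) map, and its fibers are the translational parts, a copy of $(\bR^{2,1})^n$. By definition,
\[
\mathcal{F}^{\rome}_n = \Psi^{-1}(\mathcal{D}_n), \qquad \mathcal{F}^{\romel}_n = \Psi^{-1}(\mathcal{D}^{\romel}_n),
\]
where $\mathcal{D}_n\subset \Hom(F_n,\PO(2,1))$ is the set of discrete faithful representations acting properly and freely on $\Ss_+$ (the preimage of the Fricke subspace $\mathfrak{F}_n$), and $\mathcal{D}^{\romel}_n$ is the subset with no parabolics. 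Preimages of semialgebraic sets under polynomial maps are semialgebraic. So the first assertion reduces to the semialgebraicity of $\mathcal{D}_n$ and $\mathcal{D}^{\romel}_n$.

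For that I would use the Fricke description recalled above. $\mathfrak{F}_n$ is semialgebraic, being a finite union of strata parametrized by trace coordinates, cf.\ \cite{G09}. Being parabolic is the algebraic condition $\mathrm{tr}^2 = 9$ together with the element not being the identity, and only finitely many boundary-parallel conjugacy classes can become parabolic. Hence on each topological type the non-parabolic locus is the complement of finitely many algebraic conditions $\mathrm{tr}^2(\Pi\mathcal{L} h(w_j)) = 9$, for finitely many words $w_j$ representing peripheral classes. There are only finitely many topological types of surfaces with free fundamental group of rank $n$. This gives semialgebraicity of $\mathcal{D}^{\romel}_n$, hence of $\mathcal{F}^{\romel}_n$.

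Next, apply the cone neighborhood theorem (Theorem 9.2.1 of \cite{Bochnak}) to the semialgebraic pair $(\mathcal{F}^{\rome}_n, Z)$ at $\rho$, where $Z := \mathcal{F}^{\rome}_n\setminus \mathcal{F}^{\romel}_n$. Triangulating compatibly with $Z$ gives a neighborhood $N$ of $\rho$, semialgebraically homeomorphic to a cone $C$ with apex $\rho$, in which $Z\cap N$ is a subcone. Thus $N' = N\setminus Z$ is a cone minus the apex direction, and it is open. Since $\rho$ has parabolics, $\rho\in Z$, so $Z$ passes through $\rho$.

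The main obstacle is showing that $Z\cap N$ has codimension $\geq 1$ in $N$, so that $N'$ is dense. Near $\rho$, $Z\cap N$ is contained in the finite union
\[
\bigcup_j \{\mathrm{tr}^2(\Pi\mathcal{L} h(w_j)) = 9\}.
\]
Each of these is an algebraic subset of $N$, and it suffices that none of them contains an open subset of $N$. To see this, I would exhibit arbitrarily close deformations of $\rho$ within $\mathcal{F}^{\rome}_n$ that open each cusp into a funnel: fix the translational part, and deform the linear part by Fricke coordinates so that the peripheral element $w_j$ becomes hyperbolic while the group stays Fuchsian. This is possible because opening cusps into geodesic boundary is a standard Fricke-space deformation in the same (free-group) topological type. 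Near any point of $Z\cap N$ the same argument applies, so the complement is dense. Since the translational fibers are unaffected, the codimension estimate in $\PO(2,1)^n$ pulls back through $\Psi$ to $N$.
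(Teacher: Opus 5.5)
Your route matches the paper's: pull back along the fibration $(\Pi\circ\mathcal{L})^n$, use that only finitely many peripheral words can become parabolic, and apply the cone-neighborhood theorem. Your cusp-opening argument for density is more explicit than the paper's bare assertion of codimension $\geq 1$.

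\textbf{The global semialgebraicity step fails.} You argue ``finitely many topological types, hence finitely many peripheral words $w_j$, hence $\mathcal{D}^{\romel}_n$ is semialgebraic.'' But the peripheral words depend on the marking $F_n\cong\pi_1(S)$, not only on the type of $S$, and for some types there are infinitely many markings up to the mapping class group. Take $n=2$ and a three-holed-sphere representation, whose mapping class group is finite. Precomposing it with elements of $\mathrm{Out}(F_2)$ gives infinitely many distinct triples of peripheral conjugacy classes. The set of peripheral classes is locally constant on the convex cocompact locus, so these representations lie in infinitely many different connected components of $\mathcal{F}^{\romel}_2$. A semialgebraic set has only finitely many components, so neither $\mathcal{F}^{\romel}_n$ nor $\mathcal{F}^{\rome}_n$ is semialgebraic globally. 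Your appeal to $\mathfrak{F}_n$ being semialgebraic has the same defect. The paper's sketch effectively works only near $\rho$: it describes the parabolic locus as finitely many semialgebraic sets ``passing through $\rho$.'' A local description is all the cone-neighborhood theorem needs.

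\textbf{What you need instead is a local stability statement.} Let $w_1,\dots,w_k$ be words representing the cusps of $\rho$ in $\rho$'s own marking. You need a neighborhood $U$ of $\rho$ with two properties:
\begin{itemize}
\item $h\in U$ lies in $\mathcal{F}^{\rome}_n$ if and only if no $\Pi\mathcal{L}h(w_j)$ is elliptic, i.e.\ the trace of the orientation-preserving representative is $\geq 3$;
\item for such $h$, the quotient has the same marked type as $\rho$ with some cusps possibly opened into funnels, so its only parabolic elements are conjugates of powers of the $h(w_j)$.
\end{itemize}
This follows from the Ehresmann--Thurston principle applied to the compact core of $\Ss_+/\Pi\mathcal{L}\rho(F_n)$, i.e.\ the convex core minus cusp neighborhoods.

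This fact is what actually justifies your inclusion $Z\cap N\subset\bigcup_j\{\mathrm{tr}^2=9\}$. Global Fricke theory does not by itself rule out some other word becoming parabolic along a sequence converging to $\rho$. With the local statement, $\mathcal{F}^{\rome}_n\cap U$ is semialgebraic for a semialgebraic ball $U$, the cone theorem applies, and your cusp-opening and pull-back steps complete the proof.

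A minor point: $\mathrm{tr}^2=9$ does not characterize parabolics in $\PO(2,1)$, since glide reflections of translation length $\log(2+\sqrt3)$ have trace $\pm 3$. This is harmless here, because peripheral words map to orientation-preserving elements.
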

\begin{proof} 
%Let $\mathcal{F}^{\rome}_n \subset \Hom(F_n, \SO(2,1))$ is the inverse image of $\mathcal{F}_n$. 
The subset $\hat D_n$ of $\mathcal{F}^{\rome}_n$ of representations 
with some parabolic image elements is a union of finitely many semi-algebraic subsets of codimension $\geq 1$ passing through $\rho$
since only peripheral elements can have a parabolic holonomy. 
The subset in question is the inverse image of $\hat D_n$. 

%There is a map 
%$\mathcal{L}': \Hom(F_n, \Isom(\Lspace)) \ra \Hom(F_n, \PO(2, 1))$ by %%sending 
%$h(g)$ to $\Pi\circ \mathcal{L}(h(g))$ for $g \in F_n$. 
Define $\mathcal{L'} := \Pi\circ \mathcal{L}\circ h$.
This can be considered a fiber bundle
$(\Pi\circ\mathcal{L})^n:\Isom(\Lspace)^n \ra \PO(2, 1)^n$
with fibers that can be identified with $(\bZ_2\ltimes \bR^{2, 1})^n$. 
The proposition follows from the standard 
cone-neighborhood theory of semi-algebraic sets. 
%Let $\mathcal{F}$ denote the subspace of Fuchsian representations
%in $ \Hom(F_n, \PO(2, 1))$. This fibers over a Fricke space of hyperbolic surfaces 
%allowing cusps, which is a cell with nonempty boundary.
%
%Since it is a fiber bundle, 
%$\mathcal{F}$ is a manifold with boundary. 
%We have $h \in ({\mathcal{L}^n})^{-1}(\mathcal{F})$, and 
%$({\mathcal{L}^n})^{-1}(\mathcal{F}) \ra \mathcal{F}$ is again a fiber bundle with fibers diffeomorphic to  $(\bR^{2, 1})^n$. 
%%Hence, there is always a path of Fuchsian representation $\rho_t$ where 
%$\rho_t$ has no parabolics. 
\end{proof} 

%Suppose that $\mathcal{L}$ acts on a positive cone.  \marginpar{\tiny double covering: where to start?} 
%We may suppose that they are not parabolic. 

%We will use the Riemannian metric on $\Hom(F_n, \Isom(\Lspace))$ considered 
%as the product Riemannian metric on $\Isom(\Lspace)^n$, which is a geometric metric space. 

\subsection{Diffused Margulis invariants} 

Let us denote by $\mathfrak{F}_{n, C}$ the subspace of $\mathfrak{F}_n$ consisting 
of representations corresponding to convex cocompact Fuchsian groups. 
This is the complement of Fuchsian representations with parabolic elements in  $\mathfrak{F}_n$. 
%Let $\mathcal{F}^{\rome}_{n, C}$ be the subspace of 
%$\Hom(F_n, \Isom(\Lspace))$ 
%the inverse image of $\mathcal{F}_{n, C}$ under $\Pi \circ \mathcal{L}$. 
It is well-known that the subspace of $\Hom(F_n, \Isom(\Lspace))$ of representations 
corresponding to  $\mathfrak{F}_{n, C}$ is the open set 
$\mathcal{F}^{\romel}$.

%This theorem is well-known: 
\begin{theorem} \label{thm:nonpara}
The subset 
of $\mathcal{F}^{\romel}$ corresponding to proper actions is open 
in $\Hom(F_n, \Isom(\Lspace))$. 
\end{theorem}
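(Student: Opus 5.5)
I would derive this from the properness criterion of Goldman--Labourie--Margulis \cite{GLM09} for convex cocompact linear part, together with the fact that the criterion is stated through a continuous function on a compact space. Let $\rho \in \mathcal{F}^{\romel}_n$ act properly, and let $\Gamma_0 = \mathcal{L}(\rho(F_n))$, a convex cocompact Fuchsian group. Let $U\Sigma_0$ be the unit tangent bundle of the convex core of $\Ss_+/\Gamma_0$ restricted to the recurrent part, i.e.\ the nonwandering set of the geodesic flow. This set is compact. The \emph{diffused Margulis invariant} is the map that sends a flow-invariant probability measure $m$ on it to $\int F_\rho\, dm$, where $F_\rho$ is a continuous function obtained by pairing the translational cocycle of $\rho$ with the neutral section. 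Its value on the measure carried by a closed orbit $\gamma$ equals $\alpha_\rho(\gamma)/\ell(\gamma)$.

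The Goldman--Labourie--Margulis theorem says that $\rho$ acts properly if and only if this function has constant sign on the compact convex set of invariant probability measures. Under Hypothesis~\ref{hyp:positive} the sign is positive, so we take $\int F_\rho\, dm>0$ for all $m$. Compactness of the measure space in the weak-$*$ topology, together with continuity of $m\mapsto \int F_\rho dm$, gives a uniform bound $\int F_\rho\, dm \ge \epsilon>0$.

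The main step is openness. For $\rho'$ close to $\rho$ in $\Hom(F_n,\Isom(\Lspace))$, I would first use that convex cocompactness is open (structural stability of the geodesic flow on the nonwandering set) to get a Hölder orbit equivalence between the recurrent sets of $\rho$ and $\rho'$, close to the identity. Transporting $F_{\rho'}$ through this equivalence gives a function $\tilde F_{\rho'}$ on the fixed compact space. Up to the time reparametrization, which changes integrals by a positive factor close to $1$, invariant measures correspond to invariant measures. The neutral sections and the translation parts of the flat bundle depend continuously on $\rho'$, so $\tilde F_{\rho'}\to F_\rho$ uniformly as $\rho'\to\rho$. Hence $\int \tilde F_{\rho'}\, dm > \epsilon/2$ for all invariant $m$ once $\rho'$ is close enough. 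The criterion of \cite{GLM09} then gives properness of $\rho'$. Freeness holds automatically because there is no torsion and, by the Margulis opposite-sign lemma, no fixed points.

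The main obstacle is making the continuity of $F_{\rho'}$ in $\rho'$ uniform across a varying base space. I would handle this by working on the fixed $\Gamma_0$-quotient and using Anosov structural stability (the stability of hyperbolic sets) to identify the flows, or by citing the uniformity built into \cite{GLM09}. Alternatively, Labourie's and GLM's formulation via the flat affine bundle over $U\Sigma_0$ treats $\rho'$ as a perturbation of the cocycle on the same flow. That formulation gives continuity directly, and only the linear part needs the stability argument.
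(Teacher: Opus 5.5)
Your proposal follows essentially the paper's argument. Both use the Goldman--Labourie--Margulis criterion, compactness of the space of normalized currents (flow-invariant probability measures), and continuity in the representation to keep the diffused Margulis invariant uniformly positive near $\rho$. Your orbit-equivalence/structural-stability step spells out the uniformity in the linear part that the paper compresses into the remark that $Z^1(F_n,\bR^{2,1}_\rho)\cong(\bR^{2,1})^n$ varies continuously with $\rho$.

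The only omission is the case $\mathcal{L}(\rho(F_n))\not\subset\SO(2,1)_+$, where $\Ss_+/\Gamma_0$ is not defined as you wrote it. The paper handles this by passing to the index-two subgroup $F'$ that maps into $\Isom_+(\Lspace)$; this subgroup is locally constant in $\rho$. It then uses that properness of $F_n$ is equivalent to properness of $F'$ under the continuous restriction map.
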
 
\begin{proof} 
%We first suppose that the linear parts are in $\SO_+(2,1)$. 
%We first suppose that $\mathcal{L}(\Gamma) \subset \Isom_+(\Lspace)$. 
%A representation $\Hom(F_n, \Isom(\Lspace))$ is 
%in one-to-one correspondence with the space of pairs of form $(\rho, \vec{v})$ where 
%$\rho$ is a representation to $\SO(2, 1)$ and $\vec{v} \in H^1(F_n, \bR^{2, 1}_\rho)$. 
The projection $\mathcal{L}$ induces 
a map $\Hom(F_n, \Isom_+(\Lspace)) \ra \Hom(F_n, \SO_+(2, 1))$, 
each of whose fibers can be identified to 
the space $Z^1(F_n, \bR^{2, 1}_\rho)$ of cocycles where $\rho$ 
is an element of the image. 
$\Hom(F_n, \Isom_+(\Lspace))$ is 
in one-to-one correspondence with the space of pairs of the form 
$(\rho, \vec{v})$ where 
$\rho$ is a representation to $\SO_+(2, 1)$ and 
$\vec{v} \in Z^1(F_n, \bR^{2, 1}_\rho)$. 
(See Section 2 of \cite{GLM09}.) 

Let us denote by $\mathcal{C}(\Sigma)$ the space of currents on a complete hyperbolic surface $\Sigma$ with a compact convex core.

For $h \in \mathcal{F}^{\romel}_n$, 
the diffused Margulis invariant function is a continuous 
$\bR$-valued function on
$\mathcal{C}(\Sigma)\times Z^1(F_n, \bR^{2,1}_{\rho})$
for the linear part $\rho$ of $h$.  
The subspace $\mathcal{P}(\Sigma)$ of $\mathcal{C}(\Sigma)$ consisting 
of currents with total measure $1$ is compact in the weak $\star$-topology. 
(See Section 6 of \cite{GLM09}.) 
The main theorem of \cite{GLM09} is that 
$(\rho, \vec{v})$ corresponds to an affine group acting properly on $\Lspace$ 
if and only if the diffused Margulis invariants for $(\phi, \vec{v})$ are positive for all 
$\phi \in \mathcal{P}(\Sigma)$. 

Notice that $Z^1(F_n, \bR^{2, 1}_\rho)$ is a vector space of dimension 
$3n$ independently of $\rho$. 
Since $Z^1(F_n, \bR^{2, 1}_\rho)$ depends continuously on 
$\rho$ when $\Pi\circ \rho \in \mathcal{F}_n$, the openness of the properness locus follows. 

For the general case, 
it is easy to show that $\Hom(F_n, \Isom(\Lspace))$ has two components, 
say $C_0$ and $C_1$, where $C_0$ is 
equal to $\Hom(F_n, \Isom_+(\Lspace))$.
Choose a representation $\rho: F_n \ra \SO(2, 1)$ in $C_1$. 
There is an index-two free subgroup $F'$ isomorphic to $F_{2n-1}$ of $F_n$ such that 
$\rho| F'$ maps into $\SO_+(2, 1)$. Here, 
$F'$ is just the subgroup of elements with even word lengths. 
There is an exact sequence $1 \ra F' \ra F_n \ra \bZ_2 \ra 1$. 
Also, $F_n = \langle F', \phi\rangle$ for some $\phi \in F_n$. 
We have a continuous restriction map 
$r:\Hom(F_n, \Isom(\Lspace)) \ra \Hom(F', \Isom_+(\Lspace))$. 
%since the subset where this restriction is defined is closed, and, moreover, 
%it is open by the continuity argument. 

Finally,
since we need to add only one image of $\phi$, 
it is straightforward to show that 
the subspace of proper actions in $\Hom(F_n, \Isom(\Lspace))$ is 
the inverse image of the subspace of proper actions in 
$\Hom(F', \Isom_+(\Lspace))$ under $r$. 
\end{proof}

\subsection{Before deformation} \label{sub:before} 
To begin, we suppose that $\mathcal{L}(\Gamma) \subset \Isom_+(\Lspace)$. 
%Let $\Gamma_t$ be the parameters of free groups given by 
%$g_1(t), \dots, g_n(t)$ where $g_i(0) = g_i$ for $i=1, \dots, n$, 
%which could be chosen to be a geodesic, and $t$ is the Riemannian distance from $g_i$. 
%Here, we assume that $g_i(t) \in \Isom_+(\Lspace)$. 
%We may suppose that 
%$d_{\Isom(\Lspace)}(g_i, g_i(t)) < \eps$ for $i=1, \dots, n$. 
%We will show that for sufficiently small $\eps$, 
%$\Gamma_t$ acts properly on $\Lspace$. 
%To prove Theorem \ref{thm:main}, it is sufficient to obtain an upper bound on $t$ since we
%can restrict to a Riemannian normal neighborhood of $(g_1, \dots, g_n)$.
%
Let $h$ be a proper affine deformation of a Fuchsian representation 
that has parabolic image elements.
We will use the notation of Section \ref{sub:rcomp} from now on. 
%Let $\Gamma$ denote its image. 
%We recall some facts from \cite{CDG22}. 
%Let $\Sigma_+ := \Ss_+/\Gamma$ and $\Sigma_- := \Ss_-/\Gamma$,
%which are real projective surfaces. 
%$\Gamma$ acts as a Fuchsian group acting on the hyperbolic plane $\Ss_+$ and 
%acts properly discontinuously and cocompactly 
%on $\Ss_+ \cup a$ for a union $a$ of arcs in $\partial \Ss_+$. 
%Then $\Sigma_+$ is dense in a complete hyperbolic surface 
%$\hat \Sigma_+:= \Ss_+\cup a/\Gamma$ with cusp ends corresponding 
%to the parabolic elements in $\Gamma$. 
%The union $a$ covers a finitely many boundary components $a_1, \dots, a_m$ of $\hat %\Sigma_+$. 
%We choose a single lift $\tilde a_i$ for each $i$. 
%$a$ is a union of images of $\tilde a_i, i=1, \dots, m$ under $\Gamma$. 
There are disjoint cusp neighborhoods $E_i$, $i=1, \dots, k$. 
For each $i$, choose a horodisk $\tilde E_i$ projecting onto $E_i$.
%Also, $E_i$ for $i=1, \dots, k$ form cusp neighborhoods in $\Sigma_+$, and 
Taking the antipodal images of $\tilde E_i$ and projecting, we 
obtain 
cusp neighborhoods $E_{i, -}$ in $\Sigma_-$ for $i=1, \dots, k$. 
We denote by $E$ the union of $E_i$ for $i=1, \dots, k$, and $E_-$ the union of $E_{i,-}$ for $i=1, \dots, k$. 

%Let $\partial \Ss_+$ denote the boundary of $\Ss_+$ with the boundary orientation.
%Then there is a domain $\tilde D_i= Z(\tilde a_i)$ in $\Ss_0$ for $i=1, \dots, m$ that is a union of 
%maximal geodesics in $\Ss_0$ tangent to $\partial \Ss_+$ in the boundary direction. 
%We also have 
%$\partial D_i = \tilde a_i \cup \tilde a_{i, -}$. 
%Then $\Gamma$ acts properly discontinuously on the union of images of $D_i$ under 
%$\Gamma$. On each $\tilde a_i$, there is a unique primitive element $\eta_i$
%acting on it in the orientation direction. It also acts on $D_i$. 

%We define $\tilde \Sigma := \Ss_+ \cup \Ss_- \cup \bigcup_{i=1}^m \Gamma({\tilde D_i})$
%where $\Gamma$ acts properly discontinuously, 
%and define $\Sigma := \tilde \Sigma/\Gamma$.  
%(See Section 5.1 of \cite{CDG22} and Theorem 5.3 of \cite{CG17}.)
%Then $\Sigma$ contains $\Sigma_+$ and $\Sigma_-$ and the complement is 
%a union $A$ of annuli $A_i$ for $i=1, \dots, m$, each of 
%which is an image of ${\tilde D_i}$.

%Here, $\Sigma$ is a union of 
%\[\Sigma_+, \Sigma_-, \hbox{ and } A_i, i=1, \dots, m.\] 
%\begin{itemize} 
%\item $\Sigma_+$,
%\item $\Sigma_-$, 
%\item $A_i$, $i=1, \dots, m$. 
%\end{itemize}

%We will now define ``compactification relative to parabolics"'. 

By the assumptions of Theorem \ref{thm:main} and Hypothesis \ref{hyp:positive}, 
$\Gamma$ acts properly discontinuously and freely on
$\Lspace \cup \tilde \Sigma$, and 
the quotient is a union of $M$ and a surface $\Sigma$. 
Let $p$ denote the covering map. (See Section 5.1 of \cite{CDG22}.) 

%\marginpar{\small A figure here}

%\marginpar{\tiny Itemize all these} 
By the definition of the relative compactification, 
a disjoint collection of solid tori 
$T_1, \dots, T_k$ are closed in $M_\romec:= (\Lspace/\Gamma) \cup \Sigma$, 
%for two complete hyperbolic surfaces $\Sigma_+, \Sigma_-$ with cusps  and 
and a compact $3$-manifold $M_\romecr:= M_\romec - \bigcup_{i=1}^k T_i^o$ is a deformation retract of $M_\romec$ such that
$M_\romecr \cap T_i$ is an annulus ${\mathfrak{A}}_i$ with boundary components 
$\alpha_i^+$ and $\alpha_i^-$ in $\Sigma_+$ and $\Sigma_-$ respectively. 
These have parabolic holonomies. 
Here, $\Sigma_e = \Ss_e/\Gamma$ for each $e=+, -$ denotes a complete hyperbolic surface 
where each cusp has the neighborhood bounded by 
$\alpha_j^{\rome}$ for some $j=1, \dots, k$.

Let $\eta_i$ denote the deck transformation corresponding to $\alpha_i^+$ acting on a preimage $\tilde \alpha_i^+$ of $\alpha_i^+$ for each $i$. 
In $\Lspace$, ${\mathfrak{A}}_i$ is covered by a disk $\tilde {\mathfrak{A}}_i$ whose boundary is the union of 
a horocycle in $\Ss_+$, its antipodal image in $\Ss_-$, and 
the segment $\zeta(p_i)$ associated to 
the parabolic fixed point $p_i$ of $h(\eta_i)$.
Now, $\tilde {\mathfrak{A}}_i$ is invariant under a one-parameter group $\Phi_{i,s}$, $s\in \bR$, of parabolic affine transformations containing $h(\eta_i)$. 
Also, $\tilde {\mathfrak{A}}_i$ bounds a $3$-cell $\tilde T_i$ covering $T_i$. 
The inverse image $p^{-1}({\mathfrak{A}}_i)$ is the union of 
disjoint copies of $\tilde {\mathfrak{A}}_i$ under $\Gamma$, 
and $\partial M_{\romecr}$ is covered by 
$(\tilde \Sigma  \setminus E) \cup 
\bigcup_{i=1}^k p^{-1}({\mathfrak{A}}_i)$.

\subsection{Deforming solid tori}\label{sub:deformtori} 
Now, we will try to make some of these solid tori 
homeomorphic to annuli times half-open intervals 
into compact solid tori by deforming their boundary
components. 
Now assume $h_\mu$ lies in a cone neighborhood $N$ of $h$ in $\mathcal{F}_n^{\rome}$
%for $\mu \in \mathcal{F}^{\romel}_n$.
for $\mu \in \Isom(\Lspace)^n$ 
described in Lemma \ref{lem:deformnop}.
%Let $N'$ be the complement in $N$ of 
%the set of representations with parabolic elements. 
Here, we assume $\mu$ is regarded as an element of $\bR^{6n}$ parameterizing $N$ where $\mu_0$ corresponds to the origin, and $N$ corresponds to a ball intersected with 
$\mathcal{F}_n^{\rome}$. 
(Note $\dim \Isom(\Lspace) = 6$.) 
Let $\mu_0$ correspond to $h$. 

We will choose a sufficiently small neighborhood in $N$ so that the completeness can be established. 
We now describe how these solid tori deform, 
and how we compactify them, ignoring $M_{\romecr}$ for a while: 

As we deform $\Gamma$, we change
$h(\eta_i)$ to a hyperbolic element $h_\mu(\eta_i)$ for 
each $i=1, \dots, k_v \leq k$ for $\mu\in N$.  
Note that we may not be deforming all of them to hyperbolic elements,
and we may assume the changing to hyperbolic elements 
happens for $i=1, \dots, k_v$ 
without loss of generality. 
We denote by $\Gamma_\mu$ the image of $h_\mu$ in $N$.

%There is a one-parameter family of hyperbolic elements 
%$\Phi_{i, s}(t)$ for $s > 0$ containing $h_t(\gamma_i)$ for each $t$ and $i$. 
%We have $\Phi_{i, s}(t) \ra \Phi_{i, s}$ for each $i=1, \dots, k, s\in \bR$ as $t \ra 0$. 

We first deform $T_i$ for $i=1, \dots, k$ bounded by ${\mathfrak{A}}_i$. %where
%each $T_i$ is diffeomorphic to ${\mathfrak{A}}_i\times [0, 1)$. 
%
Define 
\[\hat T_i :=(\Lspace \cup \Ss_+\cup \Ss_-)/\langle h(\eta_i)\rangle\] where $T_i$ embeds
as a submanifold  bounded by $\hat {\mathfrak{A}}_i$ that is the image of $\tilde {\mathfrak{A}}_i$. 

Here, we need to choose $N$ to be smaller if necessary for the argument to work. 

We define $\hat T_i(\mu):= (\Lspace \cup \Ss_+\cup \Ss_-)/\langle h_\mu(\eta_i)\rangle$. 
%We can choose as follows: 
We can choose a compact neighborhood $N_i$ of $\hat{\mathfrak{A}}_i$ in $\hat T_i$, 
and a compact neighborhood $N_i(\mu)$ of ${\mathfrak{A}}_i(\mu)$ in $\hat T_i(\mu)$ so that the following hold{\em :}
\begin{itemize}
\item There are connected 
preimages $\tilde N_i$ in $\Lspace \cup \Ss_+\cup \Ss_-$ of $N_i$
and $\tilde N_i(\mu)$ in $\Lspace \cup \Ss_+ \cup \Ss_-$ of $N_i(\mu)$.
\item There is a diffeomorphism $f_i(\mu): N_i \ra N_i(\mu)$ 
lifting to a diffeomorphism $\tilde f_i(\mu): \tilde N_i \ra \tilde N_i(\mu)$. 
\item $\tilde f_i(\mu) \ra \Idd$ in the $C^r$-topology for $r \geq 2$.
\end{itemize}
We construct $\tilde f_i(\mu)$ as a family of immersions 
as in  the proof of Thurston and Morgan 
written up by Lok \cite{Lok84}. 
We consider a cover of
$(\Lspace \cup \Ss_+\cup \Ss_-)$
by compact balls and define smooth isotopies 
starting from the compact sets in the intersections of the maximal number of balls. 
We can choose $\tilde f_i(\mu)$ so that it converges 
in the sense of $C^{r}, r \geq 2$ as 
$\mu \ra \mu_0$. 
Since $N_i$ is compact, it is covered by finitely many compact balls $B_{i, l}, l=1, \dots, l_i$ for some $l_i$. 
%where  $f_i(\mu)$ is an embedding:
%Since we are working with a regular neighborhood of a compact annulus, 
%we may use only two precompact balls intersecting at balls. 
We can use the argument of the proof of Theorem I.1.5.3 in \cite{CEG06} by finding $N_i(\mu)$ to show that 
%with $N(\mu_0)$ for $\mu$ sufficiently close to $\mu_0$ and considering the change as the change of the connections.
the induced map $f_i(\mu): N_i \ra N_i(\mu)$ is a diffeomorphism
for $\mu$ sufficiently close to  $\mu_0$.  (See also Remark I.1.5.5 in \cite{CEG06}.) Here, 
we may need to choose smaller $N$.
%since $f_i(\mu)$ injective in the balls $B_{i, k}$ for such $t$ and 
%if two pairs  $x_t, y_t$ of points of some distances are always going to the same points under $f_i(\mu)$, we will have the non-injectivity of $f_i$.
Hence, we will choose $N_i(\mu)$ as the image of $f_i(\mu)$ for 
$\mu$ sufficiently close to $\mu_0$.

Since $\tilde f_i(\mu)$ lifts $f_i(\mu)$, we have 
$h_\mu(\eta_i) \circ \tilde f_i(\mu) = \tilde f_i(\mu) \circ h(\eta_i)$. 
We define $\tilde {\mathfrak{A}}_i(\mu)$ as the image $\tilde f_i(\mu)(\tilde {\mathfrak{A}}_i)$. 
Now, $h_\mu(\eta_i)$ is hyperbolic for $i=1, \dots, k_v$. 
Since $\tilde {\mathfrak{A}}_i(\mu)$ covers a compact annulus ${\mathfrak{A}}_i(\mu)$ in $\hat T_i(\mu)$, 
$\tilde {\mathfrak{A}}_i(\mu)$ is properly embedded in $\Lspace \cup \Ss_+ \cup \Ss_-$
as follows from Lemma 3.4 of \cite{CG17}
using the action of hyperbolic cyclic group $\langle h_\mu(\gamma_i)\rangle$.
%$\tilde {\mathfrak{A}}_i$ is properly embedded in $\Lspace \cup \Ss_+ \cup \Ss_-$ 
Now, $\langle h_\mu(\eta_i) \rangle$ acts freely and properly discontinuously 
on $\tilde {\mathfrak{A}}_i(\mu)$. 

The set $\Ss_e \cap \tilde {\mathfrak{A}}_i(\mu)$, $e=+, -$, is an arc $\alpha_i^{\rome}(\mu)$ that bounds a convex disk 
$\tilde E_i^{\rome}(\mu) \subset \Ss_e$ for $i=1, \dots, k$. 
The boundary of $\tilde E_i^{\rome}(\mu)$ is a disjoint union of $\alpha_i^{\rome}(\mu)$ and 
a closed arc 
$\beta_i^{\rome}(\mu)$ in $\partial \Ss_e$ for $i=1, \dots, k_v$, and
is the closure of 
$\alpha_i^{\rome}(\mu)$ for $i = k_v +1, \dots, k$. 

%Let $\kappa_i$ for $i=1, \dots, p$, denote the deck transformation 
%acting on $\beta_i$. 
As we deform, $h_\mu(\kappa_i)$, $i=1,\dots, p$, 
acts on an open arc $\tilde \beta_i(\mu)\subset \partial \Ss_+$ where 
the quotient circle corresponds to the boundary component of $\Sigma_\mu$. 
%We denote by $\tilde \beta_i(\mu)$ the arc where $h_\mu(\kappa_i)$ acts on. 
We define $\tilde D_i(\mu) := Z(\beta_i(\mu))$ for $i= 1, \dots, p$. 
We construct the region $\tilde F_i(\mu):= Z(\beta_i^{+, o}(\mu))$ in $\Ss_0$.
We define 
\[\tilde \Sigma_\mu:= \Ss_+ \cup \Ss_- \cup 
\bigcup_{i=1}^p \Gamma_\mu(\tilde D_i(\mu))\cup 
\bigcup_{i=1}^{k_v} \Gamma_\mu(\tilde F_i(\mu)) \subset \SI^3.\]
$\Gamma_\mu$ acts properly discontinuously on it by Theorem 5.3 of \cite{CG17}. 
We define $\Sigma_\mu:= \tilde \Sigma_\mu/\Gamma_\mu$. 
%Here, we need $|\mu| < \eps_0$ for some constant $\eps_0 >0$ for this %argument to work. 

%Heuristically, $\tilde F_i(\mu)$ covers a compact annuli that is obtained by ``opening up'' the cusp.

%Recall from \cite{CG17} that there exists a real projective surface 
%$\Sigma_\mu$ in the boundary of $\Lspace$. 

We do not yet know that $\Gamma_\mu$ acts properly discontinuously and freely on $\Lspace$, 
and we need to construct some compact $3$-manifold. 

%so that 
%$\Lspace \cup \Sigma_\mu/\Gamma_\mu$ is a compact real projective manifold 
%with boundary $\Sigma_\mu/\Gamma_\mu$. 

By Theorem  5.3 of \cite{CG17}, 
the  closed surface $\Sigma_\mu$ is a union of 
\begin{itemize} 
\item two complete hyperbolic 
surfaces $\Sigma_{+, \mu}:= \Ss_+/\Gamma_\mu$, 
\item  $\Sigma_{-, \mu}:= \Ss_-/\Gamma_\mu$,  
\item annuli $A_{i, \mu}$, 
images of $\tilde D_i(\mu)$, $i=1, \dots, p$, and 
%covered by a union of oriented geodesic semicircles tangent to $\partial \Ss_+$ and $\partial \Ss_-$ where the initial vectors at $\partial \Ss_+$  
%are agreeing with the orientation of $\partial \Ss_+$. 
\item the annular images  of strips 
$\tilde F_i(\mu)$, which we denote by $F_{i, \mu}$,  $i=1, \dots, k_v$.  (``opened-up ones''.) 
\end{itemize} 
Here, $\partial \tilde D_i(\mu) = \beta_i(\mu) \cup \beta_{i}(\mu)_-$ 
covers the union of 
boundary components of 
the closure of $\Sigma_{+, \mu}$ and that of $\Sigma_{-, \mu}$. 
$\partial \tilde F_i(\mu)$ also covers the boundary components of these, 
which were opened up from cusps.  

%\marginpar{\small A figure here} 

Then the closure of $\tilde {\mathfrak{A}}_i(\mu)$ in $\Lspace \cup \Sigma_\mu$ bounds
a $3$-cell $E_i(\mu)$ so that $T_i(\mu):= E_i(\mu)/\langle h_\mu(\eta_i)\rangle$ is a compact solid torus. 
The boundary of $T_i(\mu)$ for each $i=1, \dots, k_v$ is a union of four annuli 
\begin{multline} 
{\mathfrak{A}}_i(\mu):= \tilde {\mathfrak{A}}_i(\mu)/\langle h_\mu(\eta_i)\rangle, 
E_i^{\rome}(\mu):= \tilde E_i^{\rome}(\mu)/\langle h_\mu(\eta_i)\rangle, \rome=\pm,
\hbox{ and } \\ 
F_i(\mu):= \tilde F_i(\mu)/\langle h_\mu(\eta_i)\rangle. 
\end{multline}
The boundary of $T_i(\mu)$ for each $i=k_v+1, \dots, k$, is a union of three annuli 
\[{\mathfrak{A}}_i(\mu):= \tilde {\mathfrak{A}}_i(\mu)/\langle h_\mu(\eta_i)\rangle 
\hbox{ and } 
E_i^{\rome}(\mu):= \tilde E_i^{\rome}(\mu)/\langle h_\mu(\eta_i)\rangle,
\rome = \pm.\]
% and $\clo(D_i(\mu))/h_\mu(\eta_i)$. 

%We call this $B_i(\mu)$. At the moment, we don't have the whole compactified manifold. 

\subsection{Deforming the compact part} \label{sub:deform}

%Now, we deform $M_{\romecr}$. We do this by deforming ${\mathfrak{A}}_i$ to 
%${\mathfrak{A}}_i(\mu)$. 
As we deform $\Gamma_\mu$, we will obtain the deformation of 
the real projective structure on $M_{\romecr}$ by first deforming the 
annulus ${\mathfrak{A}}_i$
to ${\mathfrak{A}}'_i(\mu)$ for each $i$. 

Again, we need to choose a smaller neighborhood in $N$ if necessary for the following argument to work. 

We apply $\tilde f_i(\mu)$ to $\tilde {\mathfrak{A}}_i$; 
more precisely, we apply it to the one-sided neighborhood 
$\tilde N_i$ of $\tilde {\mathfrak{A}}_i$ inside $\tilde M_\romec$ as follows: 

%This can be done following Lok's argument \cite{Lok} starting from compact sets meeting ${\mathfrak{A}}_i$.  
%We can do the deformation for some $\eps > 0$ depending on $\Gamma$
%so that 
%$d_{\Isom(\Lspace)}(g_i, g_i(\mu)) < \eps$ for $i=1, \dots, n$. 
%as the theory of Lok to work here.  

%Another way to see this is to use the section on the flat bundle:
We consider $\Isom(\Lspace) $ as a subgroup of $\SL_\pm(4, \bR)$ acting on $\SI^3$ projectively. 
For each representation $\rho$, 
we construct a flat $\SI^3$-bundle over $M_{\romec}$ by the product
$\tilde M_{\romec} \times \SI^3$ and taking the quotient by the diagonal action of $F_n$: 
\[\gamma(x, y) = (\gamma(x), \rho(\gamma)(y)) \hbox{ for } x \in \tilde M_{\romec}, y\in \SI^3, \gamma \in F_n,\]
where $\gamma(x)$ denotes the deck transformation on the universal cover 
$\tilde M_{\romec}$.
%For each representation $\rho$, 
%there is a flat connection by a foliation whose leaves are of form
%$\tilde M_{\romec} \times \{y\}, y\in \SI^3$, 
%transversal to each $\{x\} \times \SI^3$, $x\in \tilde M_{\romec}$.  
%and mapping 
%to $\tilde M_{\romec}$ as  a covering map under the projection. 
The geometric structure based on $(\SI^3, \SL_\pm(4, \bR))$ is given 
by a section transverse to the horizontal leaves of the flat connection. 
(See Goldman \cite{G88}).

At $\mu_0$, we have a transversal section. For $\mu$ in a 
sufficiently small neighborhood of $\mu_0$, we 
can still obtain a deformed $M_{\romecr}$ with 
the holonomy homomorphism $h_\mu$ deforming $\Sigma$ and ${\mathfrak{A}}_i$.  
We deform first for ${\mathfrak{A}}_i$ to become ${\mathfrak{A}}_i(\mu)$ 
while deforming  
$\Sigma - E^o$ to be mapped inside $\Sigma_\mu$, and we use 
a partition of unity to extend the transversal section everywhere on $M_{\romec}$. 
For deformations of holonomy representations $h_\mu$ sufficiently close 
to $h$, the transversality is preserved since 
sections change by a sufficiently small amount 
in the $C^r$-topology for $r \geq 2$. 
Alternatively, we can use 
the approach of Lok \cite{Lok84} starting from a locally finite collection 
of compact sets covering a precompact tubular neighborhood of ${\mathfrak{A}}_i$.
Here, we need to take a sufficiently small neighborhood of $\mu_0$ in $N$ if necessary. 
%Here, we need $|t| < \eps$ for some constant $\eps >0$ for this argument to work. 

%\marginpar{\tiny Is this sufficient?} 

We call the result $M_{\romecr}(\mu)$. Now, $M_{\romecr}(\mu)$ may not 
be a quotient of a domain 
in $\clo(\Lspace)$ unlike $M_{\romec}$.  This will follow later.

%\marginpar{\tiny some skipping here} 

Now, we glue $T_i(\mu)$ to $M_{\romecr}(\mu)$ by identifying ${\mathfrak{A}}_i(\mu)$ and ${\mathfrak{A}}'_i(\mu)$ in the natural way
for each $i=1,\dots, k$. 
The result is a manifold $M_{\romecrf}(\mu)$.  
If $k_v = k$, then 
it is compact 
since so are $M_{\romecr}(\mu)$ and $T_i(\mu)$ for $i= 1, \dots, k$.
Also, we can verify that $\partial M_{\romecrf}(\mu) = \Sigma_\mu$.  

\begin{proof}[Proof of Theorem \ref{thm:main}] 
By construction, $M_{\romecrf}(\mu)$, $\mu \in N$, 
is a flat Lorentzian manifold with a relative compactification. 
Hence, Theorem \ref{thm:Carriere} proves the result. 
\end{proof}

%\marginpar{\small A figure here} 

\subsection{Completeness using the argument of Carri\`ere} \label{sub:Carriere} 

A {\em geodesic} on a real projective manifold $N$ is a map defined on an interval such that 
the composition with a developing map of its lift to $\tilde N$ 
is a map into a great circle in $\SI^n$ where the image does not 
contain any pair of antipodal points. 
Recall that a real projective manifold $N$ is {\em convex} if every path is homotopic to a geodesic relative to endpoints.
(See \cite{psconv}.)

We begin the proof of Theorem \ref{thm:Carriere}: 
By an abuse of notation, 
let $\Sigma'$ be constructed as in Section \ref{sub:rcomp} using 
a free holonomy group $\Gamma' \subset \Isom(\Lspace)$ of rank $\geq 2$. 
Let $N$ be a flat Lorentzian manifold with the holonomy group $\Gamma'$. 
Let $N_{\romec}:= N \cup \Sigma'$ be a relatively compactified flat Lorentzian manifold, 
and let $N_{\romecr}:= N_{\romec} \setminus T$ for 
$T$ the union of solid tori with parabolic holonomy groups, 
where $N_{\romec}$ deformation retracts to $N_{\romecr}$.  
Since $N_{\romecr}$ is compact, its universal cover 
$\tilde N_{\romecr}$ has a compact fundamental domain 
$F_{\romecr}$. 

We will show that $N_{\romec}^o$ is covered by $\Lspace$. 
To achieve that, we will prove that $N_{\romec}^o$ is convex. 
%(See \cite{psconv}.) 

%An {\em affine manifold} is a manifold with an affine structure, i.e., 
%a geometric structure modeled on an affine space with affine transformation 
%group acting on it. Hence, $M$ and $M_{\romecrf}$ are affine manifolds. 

As its developing map 
$\dev: \tilde N_{\romec} \ra \clo(\Lspace)$ is 
a smooth immersion,
the restriction
$\dev|\tilde N_{\romec}^o$ mapping to $\Lspace$ is also 
a smooth immersion. 
Let $h:\pi_1(N) \ra \Isom(\Lspace)$ denote the holonomy homomorphism. 
For now, assume that the holonomy group $\mathcal{L}\circ h(\pi_1(M))$ is in $\SO_+(2, 1)$.

In this paper, a ``metric'' means a distance metric. 
Again, using the theory of \cite{psconv}, we obtain a Kuiper completion 
$(\tilde N_{\romec} ^o)\che{}$ where we complete the path-metric
induced from the pulled-back Riemannian metric of the standard Euclidean Riemannian 
metric of $\Lspace$. 
The induced path-metric will be called the {\em Euclidean metric} on $\tilde N_{\romec}^o$. 
We denote by $\delta_\infty \tilde N_{\romec}^o$ the set of ideal points
of $\tilde N_{\romec}^o$.
The developing map extends to $(\tilde N_{\romec}^o)\che{}$; 
we continue to denote the extension by 
$\dev$. 
In particular, $\dev(\delta_\infty \tilde N_{\romec}^o)$ is 
contained in $\Lspace$. 

%We will show that it is a homeomorphism to a convex open domain in $\Lspace$. 
From here on, we fix $\mu$. 
Suppose that $\tilde N_{\romec} ^o$ is not convex. 
Then there exists a triangle $T$ in $(\tilde N_{\romec} ^o)\che{}$ 
with all three vertices and two edges in $\tilde N_{\romec} ^o$
and the interior of
 one edge $E_v$ meeting $\delta_\infty \tilde N_{\romec} ^o$.
Let $v$ be the opposite vertex. 
The two other edges are disjoint from $\delta_\infty \tilde N_{\romec} ^o$. 
(See \cite{CG93}, where 
we have a slightly different metric but the arguments 
are the same.)
%Then  $E$ meets an ideal point.

% \marginpar{\small notation $E$ problem... abuse.}

%Let $v$ denote the vertex opposite $E'$. 
Consider the maximal domain $D_v$ in $\tilde N_{\romec} ^o$
consisting of points that can be reached by a segment from $v$. 
Since $\dev$ is an immersion, $D_v$ is an open domain.
Clearly, $\dev| D_v$ is an embedding into $\Lspace$. 
$T$ is in the closure $\clo(D_v)$. 
By a perturbation of $T$ inside $\clo(D_v)$ as in Proposition 4.3 of \cite{psconv}, 
we can assume that there exists a triangle $T'$ that meets 
$\delta_\infty \tilde N_{\romec} ^o$. 
We may assume that the intersection occurs only at 
a single interior point $p'$ of an edge say $E'_v$
by perturbing the edge $E_v$ in $\clo(D_v)$. 
Now, we let $T$ be this $T'$ and $E_v$ be this $E'_v$ by relabeling. 

Let $p:\tilde N_{\romec}  \ra N_{\romec} $ denote the universal covering map. 
Let $l$ be a maximal segment in $E_v$ ending at an ideal point. 
Then $p: l\ra N_{\romec} $ maps to an arc of infinite length 
with respect to any complete Riemannian metric of $N_{\romec} $. 
We claim that $l$ cannot be in the preimage of $T_i$ for any $i=1, \dots, k_v$ 
after some point. Since each of the components of the preimage is 
a closed subset of $\Lspace \cup \Ss$, 
$\dev|l$ must end at a point of $\Ss$. 
This contradicts the fact $\dev(p') \in \Lspace$. 
Hence, $p| l$ must enter and leave $N_{\romecr} $ infinitely many times. 
Hence, there is a sequence $\{p_i\in l\}$ and an 
unbounded sequence $\{g_i \in F_n\}$ such that 
$g_i(p_i) \in F_{\romecr}$.

We can also form a path-metric on $\tilde N_{\romec} $ from the induced Riemannian 
metric from the standard spherical Riemannian metric of $\SI^3$ 
using the developing map. 
The induced metric is called a {\em spherical metric}. 

\begin{lemma}\label{lem:compact} 
Let $N'$ be a real projective manifold  with totally geodesic boundary. 
Suppose that 
the interior of its universal cover $\tilde N'$ develops into $\Lspace$. 
Let $K$ be a compact subset of $\tilde N'$. 
Then there exists $\eps > 0$ such that each
ball of Euclidean radius $\eps$ with 
center $x \in K \cap \tilde N^{\prime o}$ is mapped by 
the developing map 
to a Euclidean ball of the same radius in $\Lspace$.  
\end{lemma}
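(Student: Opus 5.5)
The plan is to use the fact that the real projective structure on $\tilde N'$ extends across the totally geodesic boundary. Near boundary points, charts are half-balls of $\SI^3$, and the developing map is a local diffeomorphism up to the boundary. We then cover $K$ by finitely many such charts and extract a Lebesgue number. Concretely, for each point $y\in K$ choose an open neighborhood $U_y$ in $\tilde N'$ on which $\dev$ is an embedding onto the intersection of an open ball $B_y\subset\SI^3$ with a closed half-space bounded by the great sphere containing the image of the boundary. For interior $y$, the intersection is just $B_y$. By compactness of $K$, finitely many $U_{y_1},\dots,U_{y_m}$ cover $K$. Shrinking, we may assume that each $\dev(U_{y_j})$ is convex in an affine chart, so that segments inside it lift uniquely.

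Next we compare the Euclidean picture with these charts. For $x\in K\cap\tilde N^{\prime o}$, $\dev(x)\in\Lspace$, and the Euclidean ball of radius $\eps$ about $\dev(x)$ is meant to be the image of the metric ball $B(x,\eps)$ in the pulled-back Euclidean path metric. The key step is to choose $\eps$ so that the segments from $\dev(x)$ of Euclidean length $<\eps$ lift from $x$ inside $\tilde N^{\prime o}$. Then $\dev$ restricted to the union of these lifted segments is an isometric embedding onto the Euclidean ball, and that union is exactly $B(x,\eps)$, since $\dev$ is a local isometry for the pulled-back metric.

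The obstacle is boundary points, where the Euclidean metric degenerates: the boundary develops into $\Ss$, at infinite Euclidean distance. A point of $K$ near $\partial\tilde N'$ therefore has a large Euclidean neighborhood inside the chart $U_{y_j}$. Indeed, $\dev(U_{y_j})\cap\Lspace$ contains, near a boundary point, a set of the form (neighborhood of a point of $\Ss$)$\cap\Lspace$, which contains Euclidean balls of radius tending to infinity as one approaches $\Ss$. To handle this uniformly, take the compact set $K' = K\cap\bigcup_j \clo(U'_{y_j})$ for slightly smaller charts $U'_{y_j}$. On each chart, the function $x\mapsto \sup\{r : \text{the Euclidean ball of radius } r \text{ about } \dev(x) \text{ lies in } \dev(U_{y_j})\}$ is positive, lower semicontinuous, and tends to $+\infty$ at boundary points. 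Hence it has a positive minimum $\eps_j$ over $K\cap\clo(U'_{y_j})\cap\tilde N^{\prime o}$; this holds because near the boundary the value is large, and the remaining set is compact in the interior. Set $\eps=\min_j\eps_j>0$.

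Finally, for $x\in K\cap\tilde N^{\prime o}$, pick $j$ with $x\in U'_{y_j}$. Then the Euclidean ball $B$ of radius $\eps$ about $\dev(x)$ lies in $\dev(U_{y_j})$. Thus $(\dev|U_{y_j})^{-1}(B)$ is a set containing $x$ that maps homeomorphically onto $B$. Since $B$ is convex, straight segments give paths whose pulled-back lengths equal Euclidean lengths, so this preimage is exactly the Euclidean-radius-$\eps$ ball about $x$ in the path metric. This gives the conclusion of Lemma \ref{lem:compact}.
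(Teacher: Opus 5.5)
Your argument is correct, but it handles the boundary differently from the paper. The paper works in the spherical path metric, where points of $\partial\tilde N'$ are ordinary points at finite distance. It takes a Lebesgue number $\delta$ for a cover of $K$ by open sets on which $\dev$ is a homeomorphism. It then uses one global comparison: on $\Lspace$ the spherical metric is at most twice the Euclidean one. So a Euclidean ray of length $<\eps<\delta/2$ from $x$ is spherically shorter than $\delta$, stays in one chart, and lifts, and boundary points need no separate treatment.

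You instead measure, chart by chart, the Euclidean room $\rho_j$ around $\dev(x)$ inside $\dev(U_{y_j})$. You control it near the boundary by the claim $\rho_j\to+\infty$, together with lower semicontinuity on the compact set $K\cap\clo(U'_{y_j})$. That claim is true: in the affine chart the spherical line element is at most $|dy|/\sqrt{1+|y|^2}$, so Euclidean balls of fixed radius centered near a point of $\Ss$ are spherically tiny. But you only sketch it, and it is more than you need. A positive lower bound for $\rho_j$ near boundary points suffices, and that is exactly what the paper's inequality gives; once you use it, your argument reduces to the paper's. In exchange, your two-level charts $\clo(U'_{y_j})\subset U_{y_j}$ make explicit why rays actually extend, which the paper's Lebesgue-number sentence leaves implicit.

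Two small points. First, in the last step, straight segments only give $(\dev|U_{y_j})^{-1}(B)\subset B(x,\eps)$. The reverse inclusion says a path from $x$ of Euclidean length $<\eps$ cannot leave this open set, and it needs the usual open--closed lifting argument. Second, like the paper, you use that $\partial\tilde N'$ develops into $\Ss$. This is not literally a hypothesis of the lemma, but it holds in the application, and the statement fails near boundary points that develop into $\Lspace$.
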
 
\begin{proof} 
We can cover $K$ by open sets such that on each
the developing map is a homeomorphism.  
Choose $\delta >0$ that is a Lebesgue number of 
this covering for the spherical metric. 

We can verify easily by a direct computation that 
the spherical metric on $\Lspace$ is bounded above by 
twice the Euclidean metric, 
and the same inequality holds on $\tilde N^{\prime o}$. 
Let $\eps > 0$ be less than $\delta/2$. 
Then any directed ray from $x$ in any direction can be extended up to a 
length $<\eps$ since it is contained in one of the open balls of the cover.
%with respect to $\tilde N^{\prime o}$. 
The conclusion follows. 
\end{proof}

%Let $P$ be the union of 
%components of the preimage
%of $T_i, i = k_v+1, \dots, k$. 
%% that intersect $F_\romecr$. 
%%Let $P'$ be the union of components 
%%of the preimage of $T_i, i = k_v+1, \dots, k$ disjoint from $F_\romecr$. 
%Let $F_{\romecr, 2}$ denote the union of
%$F_{\romecr}$ and the
% images of $F_{\romecr}$ 
%meeting $F_{\romecr}$ in $\tilde N_{\romecr} $. 
%%Let $F'$ denote the topological interior of $F_{\romecr, 2}$ 
%%in $\tilde N_{\romecr} $. 
%There is a lower bound $\eps' >0$ for the distance from
%$F_{\romecr}$ 
%to $(\tilde N_{\romec}  \setminus F_{\romecr, 2}) \setminus P$ 
%with respect to the spherical metric. 
%%Hence, there is a lower bound $\eps> 0$ for the induced Euclidean metric $d_E$ on 
%%$\tilde N_{\romec} $: 
%There is a lower bound on the distance 
%from $F_{\romecr}$ to $\tilde M_{\romecr} \setminus F_{\romecr, 2}$ 
%with respect to the induced spherical metric on 
%$\tilde N_{\romec} $ 
%since one is compact
%and the other has a disjoint closure  on $\tilde N_{\romec} $.
%%with induced spherical metric.  
%On $\Lspace$, twice the Euclidean metric dominates the spherical metric. 
%Hence, this will hold on $\tilde N_{\romec} $, and there is 
%a lower bound on the Euclidean distance as well. 

Choose $\eps>0$ smaller than half of the constant from Lemma 
\ref{lem:compact} for the compact set $F_{\romecr}$,
which is the compact fundamental domain form $N_{\romecr}$. 
Now, we consider the union $F''$ 
%of $F_{\romecr, 2}$ and 
of all the compact balls of radius 
$\eps/2$ centered at points of 
$F_\romecr \cap \tilde N_{\romec}^o$ 
%$F_\romecr \cap P \cap \tilde M_{\romecrf}^o$ 
under the Euclidean metric. 
By Lemma \ref{lem:compact}, these balls all develop 
to Euclidean balls of same radius.

%where $P$ is the union of 
%components of the inverse images 
%of $T_i, i = k_v+1, \dots, k$ that intersect $F_\romecr$. 
We claim that $F''$ is compact in $\tilde N_{\romec} $: 
This follows from the fact that any sequence of closed Euclidean balls $B_i$ of radius $\eps/2$ centered at $q_i \in F_{\romecr}$ geometrically converges to a singleton where $q_i$ converges to. 
Hence, any sequence of points of $F''$ has a convergent subsequence. 

Now $g_i(p_i)$ is contained in a compact Euclidean ball  $E_i$ in $\tilde N_{\romec} ^o$ centered at $g_i(p_i)$ of radius $\eps/2$.
%since $\eps$ is chosen by Lemma \ref{lem:compact}. 
%This ball is contained in $F''$: 
%If $x \in E_i$ is in $\tilde N_{\romecr} $, then $x$ must be in $F_{\romecr, 2}$ by
%the distance conditions. If $x$ is in $P$, then there is a 
%directed ray from 
%$g_i(p_i) \in F_{\romecr}$ to $x$. 
%This path cannot meet $\tilde N_{\romecr}  \setminus F'' \setminus P^o$ %since we have a lower bound $\eps$. 
%If it meets $P$, then $x$
%must be inside one of the added Euclidean balls in the previous paragraph. 
%Hence, $E_i \subset F''$. 
Also, $E_i$ develops to a Euclidean ball of radius $\eps/2$
by our choice in Lemma \ref{lem:compact}. 
%this follows. 
%We take a directed ray from $x$ towards any direction. It can only pass 
%$F_{\romecr, 2}$ or $P$. By our choice in Lemma \ref{lem:compact}, 
%this follows. 

An {\em ellipsoid} on a flat Lorentzian manifold is a compact $3$-ball whose lift
in the universal cover develops to an ellipsoid in $\Lspace$. 
The {\em center} of an ellipsoid is the point corresponding 
to the center of the developed image. 

Denote by $d_{\mathrm{E}}$ the Euclidean metric on $\tilde N_{\romec}^o$ 
 induced from the Euclidean metric of $\Lspace$.  
As Carri\`ere \cite{Carr89} showed, 
$g_i^{-1}(E_i)$ is a sequence of ellipsoids with centers $p_i$ 
so that the Euclidean diameter $\diam_{d_{\mathrm{E}}}(g_i^{-1}(E_i)) > \delta$ for $\delta> 0$. 
This follows since the discompactedness of the linear 
part in $\SO(2,1)$ in $\Isom(\Lspace)$ is $1$. 
Hence, $g_i^{-1}(E_i)\cap T$ 
contains a segment $s_i$ with an endpoint $p_i$ of Euclidean length at least 
$\delta> 0$. 

Hence, $g_i^{-1}(E_i) \cap T$ contains a point $x_i$ of $d_{\mathrm{E}}(x_i, p_i) =\delta/2$ 
converging to $x \in T \setminus \delta_\infty \tilde N_{\romec} ^o$ after passing to a subsequence 
 by the geometry of the triangle.  
 Since $g_i(p_i)$ is in the compact ball of 
radius $\eps/2$ in a compact $F''$, 
this is a contradiction to the proper discontinuity of the action of the fundamental group
on $\tilde M_{\romecrf}$. 
%Since $g_i^{-1}(F_2)$ contains $g_i^{-1}(E_i)$, there is a sequence of points $x_i \in F_2$ 

%We remark that we still have compact fundamental domain and the argument still works
%although the fundamental domain is not in $\Lspace$. 

%\marginpar{\small A figure here} 

%\marginpar{$\Omega$ and $\Omega_t$ define these?} 
Therefore, $N_{\romec} ^o$ is convex and $\dev|\tilde N_{\romec} ^o$ is an embedding into 
a convex domain $\Omega$ in $\Lspace$. %This domain is $\Gamma$-invariant. 

The completeness as an affine manifold follows: 
%$\Omega$ contains the domains $\tilde E_i$. 
%Hence $\clo(\Omega)$ contains the strips $\tilde {\mathfrak{A}}_i$ and %its $\Gamma$-images. 
%The set of $\Gamma$-images of these points is contained in 
%$\clo(\Omega)$. 
Since $\tilde F_i$ and $\tilde D_i$ are in 
$\tilde N_{\romec} $, 
%we see that the fixed points of 
%$h(g_i)$ and $h(\gamma_i)$ in $\partial \Ss_+$ and its antipodes in 
%$\partial \Ss_-$ are in the closure of 
their images are in the closure of 
$\Omega= \dev(\tilde N_{\romec} )^o$. 
%Also, the fixed points of its conjugates are in it also. 
%The closure of the set of these points is the limit set $\Lambda$.
%From the properties of the Fuchian groups, the convex hull of $\Lambda$ 
%is a convex domain $C$, and 
%so is the convex hull of the limit points in $\partial \Ss_-$ which must equal 
%$\mathcal{A}(C)$. The convex hull in $\clo(\Lspace)$ of
%$C \cup \mathcal{A}(C)\cup \Omega$ is $\clo(\Lspace)$ as we can take the interior point $x$ 
%of $C$ and its antipodes and their neighborhoods in $C$ and $\mathcal{A}%(C)$. 
Since $\clo(\Omega)$ is a convex domain in $\clo(\Lspace)$,
containing these subsets of $\Ss_0$, 
%containing $C$ and $\mathcal{A}(C)$, 
$\clo(\Omega)$ contains all the limit points of $\Ss_+$ and $\Ss_-$ of the Fuchsian group in $\partial \Ss_+$ and $\partial \Ss_-$ as well as 
$\zeta(x)$ for every limit point $x$ in them.
There are at least two disjoint geodesic arcs $\zeta(x)$ and $\zeta(y)$ for limit points $x$ and $y$ in $\Ss$ that cannot lie in 
any hemisphere. 
Since a convex hull a set in $\Ss$ is the complement of the union of all hemisphere disjoint from it, 
their convex hull must be the entire
$\Ss$. 
The only possibility is that $\clo(\Omega) = \clo(\Lspace)$.
Hence, $\Omega =\Lspace$. 

%\marginpar{\small we need that limit set and the anitpodal image is in the closure here.} 

%\marginpar{\tiny Jan 10th 12:30am.. I am a bit lost here... } 

%Final version for $\SO(2, 1)$ not just for $\SO(2, 1)_+$ here... 
%TO BE DONE... 
\subsection{General case of $\Isom(\Lspace)$} \label{sub:generalcase} 

%We will now consider a Margulis space-time $M$ 
%with parabolic elements where 
Now, suppose that 
$\mathcal{L}\circ h(\pi_1(N)$ is not contained in $\SO(2, 1)_+$. 
Assume that $\pi_1(N)$ is identified with 
$F_n$ for a free group $F_n$ of rank $n$.
We can write $F_n=\langle F',\phi\rangle$, where $F'\le F_n$ is an index-two subgroup, 
$\phi \notin F'$, and $h(F')\subset \Isom_+(\Lspace)$
as in the proof of Theorem \ref{thm:nonpara}.
%Note there are deformations $h$ giving us 
%$h|G$ and $h(g)$ for $\mu\in N$ where 
%$N$ is a sufficiently small neighborhood of $\mu_0$ in 
%$\mathcal{F}^{\romel}_n\cup \{\mu_0\}$.
%(From now on, a `neighborhood' means 
%a neighborhood in $\mathcal{F}^{\romel}_n\cup \{\mu_0\}$.)
Now, $\tilde N/F'$ covers $\tilde N/F_n$ by a two-to-one map where
the deck transformation group is generated by a map 
$\hat \phi: \tilde N/F' \ra \tilde N/F'$
induced by $\phi$, and is
isomorphic to $\bZ_2$. 
%We denote by $G$ the group deformed from $G$.
%We denote by $M$ the space $\Lspace/\Gamma$ and define
%$M:= \Lspace/h(F_n)$ for $\mu \in N$ where $N$ is a sufficiently 
%small neighborhood of $\mu_0$. 

%%Since $G$ is a free group of rank $\geq 2$, we can apply the results of Sections 
%\ref{sub:deformtori} and \ref{sub:deform}
%to $M_{\mathrm d}=\Lspace/h(G)$ to obtain a deformed complete Margulis space-time 
%$M_{\mathrm d}:=\Lspace/h(G)$ without parabolics for $\mu \in N$. 

%Since $M_1$ is double covered by $M$, 
Now, $\hat \phi$ is an involution 
on $N_{\mathrm d}:= \tilde N/F'$ without fixed points. 
%Let $\hat g$ denote the generator of this group. 
%We can choose the parabolic tori $T_1, \dots, T_k$
%so that each one is sent to a different one since $\hat \phi$ 
%does not preserve a parabolic end of $\Sigma_+$
%by Section 5.1 of \cite{CDG22}.  
%We denote by $T$ the union of $T_i$, $i=1, \dots, k$.
%We may assume that $T_1, \dots, T_{k/2}$ are not exchanged by $\hat \phi$
%and are sent to $T_{k/2 +1}, \dots, T_k$ where $k$ is even. 
%Let $T$ denote the image in $N$ of the union of 
%$T_i$, $i=1, \dots, k$.
%We identify the boundary of the 
%compact manifold $(N_{\mathrm d}\setminus T^o)/\langle \hat \phi \rangle$
%with $\partial T_1 \cup \cdots \cup \partial T_{k/2}$. 
$N_{\mathrm d}$ admits a relative compactification with respect to solid tori
$T_1, \dots, T_k$ which are preimages of ones in $N$. 
Hence, Section \ref{sub:Carriere} shows that 
$N_{\mathrm d}$ is complete, and so is $N$. 
This completes the proof of Theorem \ref{thm:Carriere}. \qed

\bibliographystyle{plain} 
%%  \bibliography{<your bibdatabase>}
\bibliography{cdg}

\end{document}

%% file: lstylesCDG.tex
\newcommand\bR{{\mathbb{R}}}

\newcommand\bZ{{\mathbb Z}}

\newcommand\Hom{{\rm Hom}}
\newcommand\dev{{\bf dev}}
\newcommand\SI{{\mathbb{S}}}

\newcommand\clo{{\rm Cl}}

\newcommand\ra{\rightarrow}

\newcommand\che{\check}

\newcommand\eps{\epsilon}

\newcommand\Aff{{\mathbf{Aff}}}

\newcommand\Idd{{\rm I}}

\newcommand\Isom{{\mathbf{Isom}}}

\newcommand\SL{{\mathsf{SL}}}

\newcommand\SO{{\mathsf{SO}}}

\newcommand\Ss{{\mathbb{S}}}

\newcommand\diam{{\mathrm{diam}  } }

\newcommand{\deltxt}[1]{}
\newcommand{\KFAR}[1]{}

\newcommand\Lspace{\mathsf E}

\newcommand\PO{{\mathsf{PSO}}}

\newcommand\Bs{{\mathsf{B}}}

\newcommand{\va}{\mathbf{a}}
\newcommand{\vb}{\mathbf{b}}
\newcommand{\vc}{\mathbf{c}}

\newcommand{\bg}{\mathsf{g}}